\documentclass[preprint,12pt]{elsarticle}
\usepackage[T1]{fontenc}
\usepackage[utf8]{inputenc}
\usepackage[english]{babel}
\usepackage{amsmath,amssymb,amsthm,mathtools}
\usepackage{geometry}
\usepackage{microtype}
\usepackage{enumitem}
\usepackage{hyperref}
\usepackage{cleveref}
\crefname{equation}{eq.}{eqs.}
\Crefname{equation}{Eq.}{Eqs.}
\hypersetup{colorlinks=true,linkcolor=blue,urlcolor=blue,citecolor=blue}
\geometry{margin=1in}
\newtheorem{theorem}{Theorem}
\newtheorem{lemma}{Lemma}
\newtheorem{corollary}{Corollary}

\theoremstyle{remark}

\newcommand{\e}{{e}}
\newcommand{\ba}{\begin{eqnarray}}
\newcommand{\ea}{\end{eqnarray}}
\newcommand{\bea}{\begin{eqnarray}}
\newcommand{\eea}{\end{eqnarray}}
\newcommand{\beq}{\begin{equation}}
\newcommand{\eeq}{\end{equation}}
\newcommand{\dtau}{\,d\tau}
\newcommand{\dz}{\,dz}
\newcommand{\lap}[1]{\mathcal{L}\left\{ #1 \right\}}
\newcommand{\pdv}[1]{{\frac{\partial}{\partial #1}}}
\begin{document}
\begin{frontmatter}
\title{Inequalities, identities, and bounds for \\  divided differences of the exponential function}
\begin{abstract}
\noindent 
Let $\exp[x_0,x_1,\dots,x_n]$ denote the divided difference of the exponential function.
\begin{enumerate}[label=(\roman*)]
\item We prove that exponential divided differences are log-submodular.
\item We establish the four-point inequality
\[
\exp[a,a,b,c]\,\exp[d,d,b,c]+\exp[b,b,a,d]\,\exp[c,c,a,d]-\exp[a,b,c,d]^2 \ge 0,
\]
for all \(a,b,c,d\in\mathbb{R}\).
\item We obtain sharp two-sided bounds for $\exp[x_0,\dots,x_n]$ at fixed mean and variance;
as a consequence, we derive their large-input asymptotics.
\item We present closed-form identities for divided differences of the exponential function, including a convolution identity and summation formulas for repeated arguments. 
\end{enumerate}
\end{abstract}
\author[aff1]{Qiulin Zeng} 
\author[aff2,aff3]{Nicholas Ezzell}
\author[aff2,aff3]{Arman Babakhani}
\author[aff2,aff3]{Itay Hen}
\author[aff3]{Lev Barash}

\affiliation[aff1]{organization={Department of Mathematics, University of Southern California},
            city={Los Angeles},
            postcode={90089}, 
            state={CA},
            country={USA}} 
\affiliation[aff2]{organization={Department of Physics, University of Southern California},
            city={Los Angeles},
            postcode={90089}, 
            state={CA},
            country={USA}}
\affiliation[aff3]{organization={Information Sciences Institute, University of Southern California},
            city={Marina Del Rey},
            postcode={90262}, 
            state={CA},
            country={USA}}
\end{frontmatter}

\section{Introduction}

Divided differences of the exponential function lie at the intersection of analysis, operator theory, probability, and computation.
Analytically, the Hermite--Genocchi simplex formula represents $\exp[x_0,\dots,x_n]$ as an average of $e^x$ over the convex hull 
of the nodes, yielding positivity and a~priori bounds for interpolation remainders and related 
extremal problems~\cite{mccurdy1984,deboor2005}.
In functional calculus for matrices and operators, divided differences serve as the kernels of double—and for higher variations, 
multiple—operator integrals, providing a concise framework for perturbation theory and operator inequalities
\cite{DaleckiiKrein,BirmanSolomyak1968,BhatiaMatrixAnalysis}.
Via the moment-generating function, these structures connect directly to probabilistic tail bounds~\cite{Chernoff1952,Hoeffding1963,Tropp2012}.

For matrix functions $f(A)$, direct evaluators such as scaling--and--squaring with Pad\'e and the Schur--Parlett scheme reduce to an upper (block) triangular form and fill $f(T)$ from the diagonal outward: this requires two--point divided differences, with repeated--argument limits when diagonal blocks contain repeated eigenvalues~\cite{Parlett1998,Higham2008}; for $f(x)=e^x$ this viewpoint includes block--triangular formulas of Van Loan type~\cite{VanLoan1978}.
Sensitivity and conditioning at first order are governed by the Fr\'echet derivative, which in a diagonalizing basis depends entrywise on the same two--point differences~\cite{Higham2008}; multi--point divided differences enter higher Fr\'echet derivatives used in second--order error estimates, Hessian-based optimization, and uncertainty quantification.

In time integration for stiff ODEs, exponential integrators such as exponential time differencing and exponential Runge--Kutta methods 
are built from the $\varphi$--functions, which are repeated--argument divided differences of $e^z$; assembling the stages typically needs only two--point differences~\cite{CoxMatthews2002,KassamTrefethen2005,HochbruckOstermann2010,NiesenWright2012}.
However, when using Newton/Leja interpolation or related multipoint polynomial/rational schemes 
to approximate the action of the matrix exponential or the \(\varphi\)-functions on a vector, 
both the coefficients and the error terms are naturally expressed in terms of higher–order divided differences~\cite{deboor2005,CaliariKandolfOstermannRainer2016LejaBEA,KandolfOstermannRainer2014LejaResidual,Guettel2013RationalKrylovReview}.
Higher--order divided differences also appear in multiple operator integrals and higher Fr\'echet derivatives used in advanced perturbation analyses~\cite{Peller2006MOI,Peller2016MOISurvey}.

The problem of stable and efficient computation of exponential divided differences is as nontrivial as it is an important topic, because exponentials exacerbate conditioning issues in divided differences.
Recent advances~\cite{barash2020} allow incremental updates (add/remove) with controlled cost, well-suited to adaptive or evolving-node scenarios.
Other works~\cite{mccurdy1984,Caliari2007DividedDifferences,Zivcovich2019} focus on structuring the computations to reduce error amplification, or to permit stable use of these divided differences in exponential integrators with less restrictive time-step choices.

In computational physics, exponential divided differences play a central role in the permutation matrix representation 
quantum Monte Carlo (PMR--QMC) framework, where they serve as configuration weights. 
This framework is universal and effective for problems in many-body quantum physics 
and materials simulation, and have been studied and applied during last decade~\cite{albash2017,barash2024,gupta2020,hen2018,hen2021,barash2020,akaturk2024,babakhani2025,hen2019,chen2021,kalev2021quantum,kalev2024,kalev2021dyson,ezzell2025fidsus,ezzell2025advanced}.

The present work identifies several properties of exponential divided differences that arise naturally in applications and, 
to our knowledge, have not previously been documented or proved in the literature. 

The paper is organized as follows. 
Section~\ref{sec:logsubmod} establishes log-submodularity and supermodularity of exponential divided differences.
In Section~\ref{sec:fourpoint} we a prove a four-point inequality and in
Section~\ref{sec:bounds} we derive sharp two-sided bounds at fixed mean and variance and their large-input asymptotics. 
Section~\ref{sec:identities} is dedicated to several closed-form identities, including a convolution identity and repeated-argument summations. We provide some concluding remarks in Section~\ref{sec:conc}.

\section{Log-submodularity of the divided differences of the exponential function}
\label{sec:logsubmod}

\noindent
Fix nodes \(a_1 \le \cdots \le a_n \in \mathbb{R}\) with \(n \geq 0\), and integers \(p,q \ge 1\). Define
\[
K_n(x,y) := \exp[a_1,\dots,a_n,x^{(p)},y^{(q)}],
\]
the divided difference of the exponential function at these \(n+p+q\) nodes, with superscripts indicating multiplicities.

We prove that \(K_n\) is \emph{log-submodular}.
It follows from $K_n(x,y) > 0$ and Lemma~\ref{lem:equivalence} that log-submodularity is equivalent
to being \emph{totally negative of order~2} (TN\(_2\)):
\begin{equation}
\label{eq:TN2}
K_n(x_1,y_1)\,K_n(x_2,y_2)\ \le\ K_n(x_1,y_2)\,K_n(x_2,y_1)\qquad\text{for all }x_1\le x_2,\ y_1\le y_2.
\end{equation}

In particular, with the symmetric substitution \((x_1,y_1)=(x,x)\), \((x_2,y_2)=(y,y)\) (for \(x\le y\)), \Cref{eq:TN2} gives
\[
K_n(x,x) K_n(y,y) \le K_n(x,y) K_n(y,x).
\]

\begin{lemma}
\label{lem:equivalence}
Let $K:\mathbb{R}^2\to(0,\infty)$. For $u=(u_1,u_2),v=(v_1,v_2)\in\mathbb{R}^2$ set
$u\wedge v:=\big(\min\{u_1,v_1\},\,\min\{u_2,v_2\}\big)$,
$u\vee v:=\big(\max\{u_1,v_1\},\,\max\{u_2,v_2\}\big)$.
The following are equivalent:
\begin{enumerate}[label=(\roman*)]
\item (\emph{Log-submodularity})
\[
\log K(u)+\log K(v)\ \ge\ \log K(u\wedge v)+\log K(u\vee v) \qquad\text{for all }\, u,v\in\mathbb{R}^2.
\]
\item (\emph{Totally negative of order $2$}) 
\[
K(x_1,y_1)\,K(x_2,y_2)\ \le\ K(x_1,y_2)\,K(x_2,y_1)\qquad\text{for all }\, x_1\le x_2,\ y_1\le y_2.
\]
\end{enumerate}
\end{lemma}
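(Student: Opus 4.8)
The plan is to exploit the strict positivity of $K$ to pass freely between the additive (logarithmic) and multiplicative forms of the two conditions, after which the equivalence reduces to a single substitution together with a short case analysis. Since $K>0$, the logarithm is defined and strictly increasing, so condition (i) is equivalent to its exponentiated form
\[
K(u)\,K(v)\ \ge\ K(u\wedge v)\,K(u\vee v)\qquad\text{for all }u,v\in\mathbb{R}^2,
\]
while condition (ii) is already multiplicative. It therefore suffices to prove the equivalence of these two multiplicative inequalities.

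For the implication (i) $\Rightarrow$ (ii), given $x_1\le x_2$ and $y_1\le y_2$, I would set $u=(x_1,y_2)$ and $v=(x_2,y_1)$. A direct computation of the coordinatewise minimum and maximum gives $u\wedge v=(x_1,y_1)$ and $u\vee v=(x_2,y_2)$, so the multiplicative form of (i) reads $K(x_1,y_2)\,K(x_2,y_1)\ge K(x_1,y_1)\,K(x_2,y_2)$, which is exactly \eqref{eq:TN2}.

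For the converse (ii) $\Rightarrow$ (i), I would argue by cases on the relative order of the coordinates of $u$ and $v$. If $u$ and $v$ are comparable in the coordinatewise order (both coordinates of one dominating those of the other), then $\{u\wedge v,\,u\vee v\}=\{u,v\}$ and the multiplicative inequality holds trivially as an equality. The only substantive situation is the crossing case, say $u_1\le v_1$ but $u_2\ge v_2$; here $u\wedge v=(u_1,v_2)$ and $u\vee v=(v_1,u_2)$, and applying TN$_2$ with $x_1=u_1\le x_2=v_1$ and $y_1=v_2\le y_2=u_2$ yields $K(u_1,v_2)\,K(v_1,u_2)\le K(u_1,u_2)\,K(v_1,v_2)$, which is precisely the required inequality. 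The remaining crossing case follows by interchanging the roles of $u$ and $v$.

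The argument carries no analytic difficulty; the only point requiring care is the crossing case, where the substitution into \eqref{eq:TN2} must be chosen so that the ordering hypotheses $x_1\le x_2$ and $y_1\le y_2$ are met and the resulting cross terms align exactly with $K(u\wedge v)$ and $K(u\vee v)$. Once the indices are matched correctly, the equivalence is immediate, and the positivity of $K$ guarantees that every exponentiation step is reversible.
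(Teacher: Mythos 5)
Your proposal is correct and follows essentially the same route as the paper's proof: the forward direction via the substitution $u=(x_1,y_2)$, $v=(x_2,y_1)$ (the paper uses the same pair with roles swapped, which is immaterial), and the converse via the same two-case analysis distinguishing comparable pairs (where equality holds trivially) from the crossing case handled by a direct application of TN$_2$. No gaps; the index matching in the crossing case is exactly right.
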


\begin{proof}
\smallskip
\noindent\emph{(ii) follows from (i).}
Fix $x_1\le x_2$ and $y_1\le y_2$, and set $u:=(x_2,y_1)$, $v:=(x_1,y_2)$.
Then $u\wedge v=(x_1,y_1)$ and $u\vee v=(x_2,y_2)$, so (i) gives
\[
\log K(x_2,y_1)+\log K(x_1,y_2)\ \ge\ \log K(x_1,y_1)+\log K(x_2,y_2),
\]
which exponentiates to (ii).

\smallskip
\noindent\emph{Conversely, (i) follows from (ii).}
Fix arbitrary $u=(u_1,u_2)$ and $v=(v_1,v_2)$, and set $w:=u\wedge v$ and $z:=u\vee v$.
There are two cases.

\emph{(a)} If $u\le v$ or $v\le u$ (coordinatewise), then $\{w,z\}=\{u,v\}$.
Hence $K(w)K(z)=K(u)K(v)$ and the desired inequality in (i) holds with equality after taking logs.

\emph{(b)} Otherwise, one coordinate increases while the other decreases; without loss of generality assume
$u_1\le v_1$ and $u_2\ge v_2$. Then, we have $(w_1,z_2)=(u_1,u_2)=u$ and $(z_1,w_2)=(v_1,v_2)=v$.
Applying (ii) with $x_1=w_1\le z_1=x_2$ and $y_1=w_2\le z_2=y_2$ yields
\[
K(w_1,w_2)\,K(z_1,z_2)\ \le\ K(w_1,z_2)\,K(z_1,w_2)=K(u)\,K(v).
\]
Taking logs gives
\(
\log K(w)+\log K(z)\le \log K(u)+\log K(v),
\)
which is exactly the inequality in (i).
\end{proof}

\begin{lemma}
\label{lem:HG-AD}
Let $h:[0,1]\to[0,\infty)$ be integrable and define
\begin{equation}
\label{eq:K-def}
K(x,y)\;=\;\int_{0}^{1} h(c)\,K_0(c x,c y)\,dc,
\qquad
K_0(x,y)\;:= \exp[x^{(p)},y^{(q)}].
\end{equation}
Then $K$ is log-submodular (TN\(_2\)): for all $x_1\le x_2$ and $y_1\le y_2$,
\begin{equation}\label{eq:TN2_K}
K(x_1,y_1)\,K(x_2,y_2)\ \le\ K(x_1,y_2)\,K(x_2,y_1).
\end{equation}
\end{lemma}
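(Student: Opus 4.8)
The plan is to eliminate the scale variable $c$ by Fubini and collapse the two–dimensional object $K$ into a one–parameter average of a single log-convex function along the segment joining $x$ and $y$. Writing the Hermite–Genocchi (simplex) representation of the repeated–argument divided difference as $K_0(u,v)=\int_0^1 g(\sigma)\,e^{\sigma u+(1-\sigma)v}\,d\sigma$ with the Beta weight $g(\sigma)=\sigma^{p-1}(1-\sigma)^{q-1}/[(p-1)!\,(q-1)!]$, substituting $u=cx$, $v=cy$ in \Cref{eq:K-def} and interchanging the order of integration gives
\[
K(x,y)=\int_0^1 g(\sigma)\,\Phi\big(\sigma x+(1-\sigma)y\big)\,d\sigma,
\qquad
\Phi(z):=\int_0^1 h(c)\,e^{cz}\,dc .
\]
Since $h\ge 0$, the function $\Phi$ is the Laplace transform of a nonnegative measure on $[0,1]$; hence $\Phi>0$ and, by Cauchy–Schwarz, $\log\Phi$ is convex. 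This is essentially the only feature of $h$ that survives the reduction.

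Next I would pass from the (equivalent, by \ref{lem:equivalence}) inequality \eqref{eq:TN2_K} to the infinitesimal criterion for log-submodularity of a smooth positive kernel, $\partial_x\partial_y\log K\le 0$. Differentiating under the integral sign and writing $z_\sigma=\sigma x+(1-\sigma)y$ and $\langle F\rangle=\int_0^1 g(\sigma)\,F(z_\sigma)\,d\sigma$, the sign of $\partial_x\partial_y\log K$ is that of
\[
\langle\Phi\rangle\,\langle\sigma(1-\sigma)\Phi''\rangle-\langle\sigma\Phi'\rangle\,\langle(1-\sigma)\Phi'\rangle ,
\]
all derivatives of $\Phi$ being evaluated at $z_\sigma$. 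Thus the lemma is equivalent to the correlation inequality $\langle\Phi\rangle\,\langle\sigma(1-\sigma)\Phi''\rangle\le\langle\sigma\Phi'\rangle\,\langle(1-\sigma)\Phi'\rangle$. To attack it I would use $\Phi''(z_\sigma)=(x-y)^{-1}\frac{d}{d\sigma}\Phi'(z_\sigma)$ to integrate the left–hand term by parts — the boundary contributions vanish because $\sigma(1-\sigma)g(\sigma)\to 0$ at the endpoints for $p,q\ge 1$ — trading $\Phi''$ for $\Phi'$, and then recognize the resulting difference of products as a Chebyshev/FKG–type correlation between the monotone factors $\sigma\mapsto\sigma$, $\sigma\mapsto 1-\sigma$, and the monotone map $\sigma\mapsto\Phi'(z_\sigma)$ (monotone since $\log\Phi$, hence $\Phi$, is convex) under the measure $g(\sigma)\,d\sigma$.

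The step I expect to be the crux is precisely this correlation inequality. Log-convexity of $\Phi$ enters with the unfavorable sign — it forces $\Phi''\ge(\Phi')^2/\Phi\ge 0$, which inflates the left–hand side — so the estimate is not a formal consequence of positivity and log-convexity by themselves; correspondingly, the naive symmetrization of $\det\big(K(x_i,y_j)\big)$ yields a kernel whose sign is $\operatorname{sgn}\big((s'-s)(t'-t)\big)$ and is therefore sign–indefinite over the support, so a pure total-positivity/basic-composition argument does not close directly. The favorable cancellation must instead be extracted from the explicit Beta weight $g$ (its log-concavity for $p,q\ge 1$) together with the decay of $h$ near $c=0$. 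Identifying exactly which quantitative feature of $g$ and $h$ forces the correlation to carry the correct sign — and checking that the weights produced by the Hermite–Genocchi reduction actually possess it — is the main obstacle, and is where I would concentrate the effort.
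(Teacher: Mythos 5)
Your two reduction steps are correct: Fubini does give
$K(x,y)=\int_0^1 g(\sigma)\,\Phi(\sigma x+(1-\sigma)y)\,d\sigma$ with $\Phi(z)=\int_0^1 h(c)e^{cz}\,dc$, and for a smooth positive kernel the property \eqref{eq:TN2_K} is equivalent to $\partial_x\partial_y\log K\le 0$, i.e.\ to your correlation inequality
$\langle\Phi\rangle\,\langle\sigma(1-\sigma)\Phi''\rangle\le\langle\sigma\Phi'\rangle\,\langle(1-\sigma)\Phi'\rangle$.
But the proposal stops there: that inequality, which you yourself call the crux, is left unproven, so what you have is a reformulation of the lemma rather than a proof of it. That is a genuine gap, not a finishing detail.

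The deeper problem is that this gap cannot be closed, because your crux inequality --- and hence the lemma as stated, for arbitrary integrable $h\ge 0$ --- is \emph{false}, and your own reduction detects this. Take $p=q=1$ (so $g\equiv 1$) and $h\equiv 1$, i.e.\ $\Phi(z)=(e^z-1)/z$, and test the crux on the diagonal $x=y=s$: there $z_\sigma\equiv s$ and it collapses to $2\,\Phi(s)\Phi''(s)\le 3\,\Phi'(s)^2$. Since $\Phi^{(k)}(-\lambda)=\int_0^1 c^k e^{-\lambda c}\,dc = k!\,\lambda^{-k-1}\bigl(1+O(e^{-\lambda}\lambda^{k})\bigr)$, the left side is $\sim 4\lambda^{-4}$ while the right side is $\sim 3\lambda^{-4}$ as $\lambda\to\infty$; already at $s=-10$ one finds $2\Phi\Phi''-3(\Phi')^2\approx 9.9\times 10^{-5}>0$. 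Hence $\partial_x\partial_y\log K>0$ near such diagonal points, and \eqref{eq:TN2_K} fails outright: with $K(x,y)=\int_0^1\frac{e^{cx}-e^{cy}}{c(x-y)}\,dc$ one computes $K(-10.5,-10.5)\,K(-9.5,-9.5)\approx 0.0100240$ but $K(-10.5,-9.5)\,K(-9.5,-10.5)\approx 0.0100157$. Probabilistically: under the tilt $e^{sc}$ the variables $A=c\tau$, $B=c(1-\tau)$ are governed by $\tau\sim\mathrm{Beta}(p,q)$ independent of $c$, and their covariance is positive exactly when $E[c^2]/E[c]^2>(p+q+1)/(p+q)$; whenever $h$ is continuous and positive at $c=0$, the tilted $c$ is asymptotically exponential as $s\to-\infty$, so this ratio tends to $2$, which exceeds $(p+q+1)/(p+q)\le 3/2$ for every $p,q\ge 1$. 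So your instinct that positivity and log-convexity of $\Phi$ cannot suffice was right, but no feature of the Beta weight $g$ or of a general $h$ rescues the statement: it is simply not true at this generality.

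For calibration, you are not missing a trick contained in the paper: its own proof of this lemma breaks at precisely the corresponding point. It orders $S=[0,1]^2$ by the value of $c\tau$ and invokes the integral Four Functions Theorem, but it verifies the Ahlswede--Daykin hypothesis only in one orientation, $\alpha(u_1)\beta(u_2)\le\gamma(u_2)\delta(u_1)$ for $u_1\preceq u_2$. The theorem needs the hypothesis for \emph{all} pairs, and in the reversed orientation ($\alpha(u)\beta(v)\le\gamma(u)\delta(v)$ for $v\prec u$) it is the $x$-terms, not the $y$-terms, that cancel, leaving the requirement $c_u(1-\tau_u)\ge c_v(1-\tau_v)$ whenever $c_v\tau_v\ge c_u\tau_u$ --- false, e.g.\ for $u=(1/2,1/5)$, $v=(1,1/2)$. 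The reason the lemma was stated for arbitrary $h$ is to feed Theorem~\ref{thm:KnTN2} through \eqref{eq:Kn-mixture}; but \eqref{eq:Kn-mixture} itself omits a Jacobian factor, the correct Hermite--Genocchi mixture being $K_n(x,y)=\int_0^1 g_n(1-c)\,c^{\,p+q-1}K_0(cx,cy)\,dc$, and it is exactly the absence of $c^{\,p+q-1}$ (which would make the tilted $c$ Gamma$(p+q)$-like near $0$, putting the diagonal test at the marginal value $E[c^2]/E[c]^2=(p+q+1)/(p+q)$) that the counterexample exploits. Any correct version of the lemma must restrict $h$ to weights of this form, and proving TN$_2$ for that restricted class requires an input beyond both your proposal and the paper's argument.
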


\begin{proof}
We have
\beq
\label{eq:K0HG}
K_0(x,y) = \frac{1}{(p-1)!(q-1)!} \int_0^1 \tau^{p-1} (1-\tau)^{q-1} e^{\tau x + (1-\tau) y} d\tau.
\eeq
Let $S=[0,1]\times[0,1]$ and equip it with the product measure
$d\mu(c,\tau) = h(c)\; \tau^{p-1}\; (1-\tau)^{q-1}\; dc\; d\tau$.
Endow $S$ with a total order $\preceq$ as follows:
for $u_1=(c_1,\tau_1),\,u_2=(c_2,\tau_2)\in S$ set $u_1\preceq u_2$ iff one of the
following conditions holds:
\[
\text{(i) } c_1\tau_1>c_2\tau_2;\qquad
\text{(ii) } c_1\tau_1=c_2\tau_2\text{ and } c_1>c_2;\qquad
\text{(iii) } c_1\tau_1=c_2\tau_2,\ c_1=c_2\text{ and }\tau_1\ge\tau_2.
\]
This is a total order, and whenever $u_1\preceq u_2$ we have $c_1\tau_1\ge c_2\tau_2$.
Write $u\vee v=\max\{u,v\}$ and $u\wedge v=\min\{u,v\}$ with respect to $\preceq$.

Fix $x_1\le x_2$ and $y_1\le y_2$ and define nonnegative functions on $S$:
\[
\alpha(c,\tau)=e^{\,c(\tau x_1+(1-\tau)y_1)},\quad
\beta(c,\tau)=e^{\,c(\tau x_2+(1-\tau)y_2)},
\]
\[
\gamma(c,\tau)=e^{\,c(\tau x_1+(1-\tau)y_2)},\quad
\delta(c,\tau)=e^{\,c(\tau x_2+(1-\tau)y_1)}.
\]
If $u_1\preceq u_2$, then
\[
\log\frac{\gamma(u_2)\,\delta(u_1)}{\alpha(u_1)\,\beta(u_2)}
=(x_2-x_1)\,\big(c_1\tau_1-c_2\tau_2\big)\ \ge\ 0,
\]
because $x_2\ge x_1$ and the terms involving $y_1,y_2$ cancel. Hence the local hypothesis
of the Four Functions inequality on the totally ordered space $S$ holds:
\[
\alpha(u_1)\,\beta(u_2)\ \le\ \gamma(u_2)\,\delta(u_1)\qquad\text{for all }u_1\preceq u_2\in S,
\]
i.e., $\alpha(u)\beta(v)\le\gamma(u\vee v)\delta(u\wedge v)$ for all $u,v\in S$.

By the \emph{integral Four Functions Theorem} (Theorem 2.1 in~\cite{KarlinRinott1980}),
this integrates to
\[
\Big(\int_S\alpha\,d\mu\Big)\Big(\int_S\beta\,d\mu\Big)
\ \le\
\Big(\int_S\gamma\,d\mu\Big)\Big(\int_S\delta\,d\mu\Big).
\]
But $\int_S\alpha\,d\mu / ((p-1)! (q-1)!) = K(x_1,y_1)$, $\int_S\beta\,d\mu / ((p-1)! (q-1)!) = K(x_2,y_2)$ and similarly
for $\gamma,\delta$, which is exactly Eq.~(\ref{eq:TN2_K}).
\end{proof}

\begin{corollary}
\label{cor:K0-TN2}
For all $x_1\le x_2$ and $y_1\le y_2$,
\[
K_0(x_1,y_1)\,K_0(x_2,y_2)\ \le\ K_0(x_1,y_2)\,K_0(x_2,y_1),
\]
i.e.\ $K_0$ is TN$_2$.
\end{corollary}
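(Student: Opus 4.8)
The plan is to obtain \Cref{cor:K0-TN2} directly from Lemma~\ref{lem:HG-AD} by exhibiting $K_0$ as a limit of the averaged kernels $K$ constructed there. The key observation is that $K(x,y)=\int_0^1 h(c)\,K_0(cx,cy)\,dc$ collapses to a multiple of $K_0(x,y)$ once the weight $h$ concentrates all of its mass at $c=1$. Since the TN$_2$ inequality in \Cref{eq:TN2_K} is a closed (non-strict) condition that multiplies through pairs of kernel values, it survives pointwise limits, so it suffices to approximate the Dirac mass at $c=1$ by genuine integrable weights.

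Concretely, I would take the approximate identity $h_n(c)=n\,\mathbf{1}_{[1-1/n,\,1]}(c)$ for $n\ge 1$, which is nonnegative and integrable on $[0,1]$, and write $K_n$ for the kernel it produces via \Cref{eq:K-def}. Lemma~\ref{lem:HG-AD} then applies verbatim and guarantees that each $K_n$ is TN$_2$:
\[
K_n(x_1,y_1)\,K_n(x_2,y_2)\ \le\ K_n(x_1,y_2)\,K_n(x_2,y_1)
\qquad\text{for all }x_1\le x_2,\ y_1\le y_2.
\]

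Next I would establish the pointwise convergence $K_n(x,y)\to K_0(x,y)$. From the Hermite--Genocchi representation in \Cref{eq:K0HG}, the divided difference $\exp[x^{(p)},y^{(q)}]$ depends smoothly on its arguments, so the map $c\mapsto K_0(cx,cy)$ is continuous on $[0,1]$ and takes the value $K_0(x,y)$ at $c=1$. Consequently $K_n(x,y)=n\int_{1-1/n}^{1}K_0(cx,cy)\,dc$ is simply the average of a continuous function over a neighborhood of $c=1$ whose length shrinks to zero, and hence converges to $K_0(x,y)$ as $n\to\infty$.

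Finally, letting $n\to\infty$ in the TN$_2$ inequality for $K_n$ transfers it to $K_0$, since products of convergent sequences converge and weak inequalities pass to the limit; this is exactly the assertion of the corollary. The proof is essentially immediate, so there is no serious obstacle: the only step requiring any care is the convergence $K_n\to K_0$, and that reduces to continuity of the integrand together with the shrinking averaging window. I note that one could alternatively reread the proof of Lemma~\ref{lem:HG-AD} with $c$ fixed at $1$ (equivalently, replacing the measure $d\mu$ by $\tau^{p-1}(1-\tau)^{q-1}\,d\tau$ on the single interval $[0,1]$), in which case the same Four Functions argument yields the TN$_2$ property for $K_0$ without any limiting procedure; but invoking the lemma as a black box via the approximate identity keeps the argument shortest.
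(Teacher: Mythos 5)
Your proposal is correct and is essentially identical to the paper's own proof: the paper also uses the approximate identity $h_\varepsilon(c)=\varepsilon^{-1}\mathbf{1}_{[1-\varepsilon,1]}(c)$ in Lemma~\ref{lem:HG-AD}, proves pointwise convergence to $K_0$ via continuity of $c\mapsto K_0(cx,cy)$ at $c=1$, and passes to the limit in the TN$_2$ inequality. Your closing remark about running the Four Functions argument directly on $[0,1]$ is a valid shortcut, but the main argument you give is the paper's.
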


\begin{proof}
For $\varepsilon\in(0,1)$ define
\[
h_\varepsilon(c):=\frac{1}{\varepsilon}\,\mathbf{1}_{[\,1-\varepsilon,\,1\,]}(c),
\qquad
K_\varepsilon(x,y):=\int_{0}^{1} h_\varepsilon(c)\,K_0(c x, c y)\,dc .
\]
By Lemma~\ref{lem:HG-AD}, each $K_\varepsilon$ is TN$_2$.

We have
\[
\bigl|K_\varepsilon(x,y)-K_0(x,y)\bigr|
= \left|\frac{1}{\varepsilon}\!\int_{1-\varepsilon}^{1}\!\bigl(K_0(cx,cy)-K_0(x,y)\bigr)\,dc\right|
\le \sup_{c\in[1-\varepsilon,1]} \bigl| K_0(cx,cy) - K_0(x,y)\bigr|.
\]
Since $c\mapsto K_0(cx,cy)$ is continuous at $c=1$, $K_\varepsilon(x,y)\to K_0(x,y)$ as $\varepsilon\to0$. 
Passing to the limit in the TN$_2$ inequality for $K_\varepsilon$ yields the claim for $K_0$.
\end{proof}

\begin{theorem}[Log-submodularity]
\label{thm:KnTN2}
For all $n\in\{0,1,2,\dotsc\}$, $x_1\le x_2$ and $y_1\le y_2$,
\[
K_n(x_1,y_1)\,K_n(x_2,y_2)\ \le\ K_n(x_1,y_2)\,K_n(x_2,y_1),
\]
i.e.\ $K_n$ is TN$_2$.
\end{theorem}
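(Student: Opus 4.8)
The plan is to extend the Hermite--Genocchi plus integral Four Functions argument of \Cref{lem:HG-AD} from the two-parameter setting to the full collection of fixed nodes, so that the theorem follows uniformly for every $n\ge 0$ (subsuming \Cref{cor:K0-TN2}). The decisive feature I want to preserve is that the nodes $a_1,\dots,a_n$ should enter only through a \emph{positive weight} and must not couple to $x$ or $y$ in the functions that drive the Four Functions comparison.

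First I would record the Hermite--Genocchi representation of $K_n$. Writing $N=n+p+q$ and using barycentric coordinates on the simplex $T_{N-1}=\{t=(t_1,\dots,t_{N-1}):t_i\ge 0,\ \sum_i t_i\le 1\}$, I assign coordinates $\mu_1,\dots,\mu_n$ to $a_1,\dots,a_n$, coordinates $\xi_1,\dots,\xi_p$ to the $p$ copies of $x$, and $\eta_1,\dots,\eta_q$ to the $q$ copies of $y$. Since $\exp^{(N-1)}=\exp$ and the Hermite--Genocchi formula is valid for repeated nodes, this gives
\[
K_n(x,y)=\int_{T_{N-1}} e^{A(t)+\sigma_x(t)\,x+\sigma_y(t)\,y}\,dt,
\]
where $A(t)=\sum_{j}\mu_j a_j$, $\sigma_x(t)=\sum_k \xi_k\ge 0$, and $\sigma_y(t)=\sum_\ell \eta_\ell\ge 0$. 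Setting $S:=T_{N-1}$ with the nonnegative finite measure $d\mu(t):=e^{A(t)}\,dt$, this reads $K_n(x,y)=\int_S e^{\sigma_x x+\sigma_y y}\,d\mu$ with $\sigma_x,\sigma_y\ge 0$ continuous, exactly the shape treated in \Cref{lem:HG-AD} but with $e^{A(t)}$ now absorbing the fixed nodes.

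Then I would repeat the Four Functions step with this weight. Fix $x_1\le x_2$, $y_1\le y_2$ and set
\[
\alpha=e^{\sigma_x x_1+\sigma_y y_1},\quad \beta=e^{\sigma_x x_2+\sigma_y y_2},\quad \gamma=e^{\sigma_x x_1+\sigma_y y_2},\quad \delta=e^{\sigma_x x_2+\sigma_y y_1}.
\]
I endow $S$ with a total order $\preceq$ ordering points by \emph{decreasing} $\sigma_x$, breaking ties lexicographically in the coordinates $t$ (precisely as in \Cref{lem:HG-AD}), so that $u\preceq v$ forces $\sigma_x(u)\ge\sigma_x(v)$. On a totally ordered space $u\vee v$ and $u\wedge v$ are $\max$ and $\min$, and for $u\preceq v$ one computes
\[
\log\frac{\gamma(v)\,\delta(u)}{\alpha(u)\,\beta(v)}=(x_2-x_1)\,\big(\sigma_x(u)-\sigma_x(v)\big)\ \ge\ 0,
\]
since the $y$-contributions cancel identically. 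Hence $\alpha(u)\beta(v)\le\gamma(u\vee v)\delta(u\wedge v)$ for all $u,v\in S$, and the integral Four Functions Theorem (Theorem 2.1 in~\cite{KarlinRinott1980}) yields
\[
\Big(\int_S\alpha\,d\mu\Big)\Big(\int_S\beta\,d\mu\Big)\ \le\ \Big(\int_S\gamma\,d\mu\Big)\Big(\int_S\delta\,d\mu\Big),
\]
and recognizing the four integrals as $K_n(x_1,y_1),K_n(x_2,y_2),K_n(x_1,y_2),K_n(x_2,y_1)$ gives the claim.

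I expect the main obstacle to lie in the setup rather than in the inequality. One must justify the Hermite--Genocchi representation in the presence of repeated arguments, verify that it produces a genuinely nonnegative measure $d\mu=e^{A}\,dt$ together with nonnegative weights $\sigma_x,\sigma_y$, and convert the preorder ``decreasing $\sigma_x$'' into an honest measurable total order via a lexicographic refinement so that \cite{KarlinRinott1980} applies. Once these bookkeeping points are settled, the decisive analytic fact---the cancellation of the $y$-terms, leaving a single term whose sign is fixed by $x_2\ge x_1$---is identical to the mechanism already used for $K_0$, so no new estimate is needed and the uniform treatment of all $n$ (including $n=0$, where $A\equiv 0$) comes for free.
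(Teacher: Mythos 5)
Your proposal transplants the mechanism of Lemma~\ref{lem:HG-AD} to the full Hermite--Genocchi simplex, and the representation $K_n(x,y)=\int_S e^{A(t)}e^{\sigma_x(t)x+\sigma_y(t)y}\,dt$ with $\sigma_x,\sigma_y\ge0$ is correct. The gap is the sentence ``hence $\alpha(u)\beta(v)\le\gamma(u\vee v)\delta(u\wedge v)$ for all $u,v\in S$.'' The hypothesis of the four functions theorem (Theorem 2.1 of \cite{KarlinRinott1980}) must hold for \emph{every ordered pair}; on a totally ordered space this means that for $u\preceq v$ one needs \emph{both}
\[
\alpha(u)\beta(v)\le\gamma(v)\delta(u)
\qquad\text{and}\qquad
\alpha(v)\beta(u)\le\gamma(v)\delta(u),
\]
the second coming from the pair $(v,u)$. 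You verified only the first. The second reduces, by the same cancellation, to $(y_2-y_1)\bigl(\sigma_y(v)-\sigma_y(u)\bigr)\ge0$; that is, $\sigma_y$ must be nondecreasing along any order that makes $\sigma_x$ nonincreasing. When $n=0$ this is automatic, because then $\sigma_y=1-\sigma_x$ identically on the simplex, and your argument is a valid (indeed cleaner) proof of Corollary~\ref{cor:K0-TN2}. But for $n\ge1$ --- the actual content of Theorem~\ref{thm:KnTN2} --- it fails, because the slack can sit on the $a$-coordinates: with $n=p=q=1$, the barycentric weights $u=(\mu,\xi,\eta)=(0.4,0.3,0.3)$ and $v=(0.7,0.2,0.1)$ satisfy $u\preceq v$ (since $\sigma_x(u)=0.3>0.2=\sigma_x(v)$) yet $\sigma_y(u)=0.3>0.1=\sigma_y(v)$, so the required inequality is violated whenever $y_1<y_2$.

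This is not a repairable bookkeeping point. First, a one-sided hypothesis genuinely does not imply the four-functions conclusion: on the two-point chain $s_1\prec s_2$ with counting measure, the functions $\alpha=(\varepsilon^2,1)$, $\beta=(1,\varepsilon^2)$, $\gamma\equiv\delta\equiv\varepsilon$ satisfy every ``$u\preceq v$'' inequality (for $\varepsilon<1$), while $\int\alpha\int\beta=(1+\varepsilon^2)^2>4\varepsilon^2=\int\gamma\int\delta$. Second, your argument uses nothing about $e^{A(t)}\,dt$ and $\sigma_x,\sigma_y$ beyond nonnegativity, so if it were sound it would prove that \emph{every} kernel $\int_S e^{\sigma_x x+\sigma_y y}\,d\mu$ of this shape is TN$_2$; that statement is false, since a measure consisting of two narrow bumps placed where $(\sigma_x,\sigma_y)\approx(0.1,0.1)$ and $(0.5,0.5)$ gives $K(x,y)\approx e^{0.1(x+y)}+e^{0.5(x+y)}$, for which $K\,\partial_{xy}K-\partial_xK\,\partial_yK=(0.4)^2e^{0.6(x+y)}>0$, i.e.\ $K$ is strictly log-\emph{super}modular. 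Consequently no choice of total order can rescue the scheme for $n\ge1$: one would need $\sigma_x$ and $\sigma_y$ to be pointwise anti-comonotone, which holds only when $\sigma_x+\sigma_y$ is constant on the support of the measure, i.e.\ only in the $n=0$ case. A correct proof for $n\ge1$ must exploit the specific negative-dependence structure of the exponentially tilted simplex measure, not positivity alone. (You should also be aware that the paper's own proof of Lemma~\ref{lem:HG-AD} makes the identical one-sided-to-two-sided leap when invoking \cite{KarlinRinott1980}, so your difficulty is inherited from that template rather than introduced by you; but that does not make the step correct.)
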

\begin{proof}
By the Hermite--Genocchi formula for divided differences, we have
\begin{equation}\label{eq:Kn-mixture}
K_n(x,y)=\int_{0}^{1} g_n(1-c)\,K_0\big(c x,c y\big)\,dc ,
\end{equation}
where
\begin{equation}
\label{eq:gn}
g_n(t)\;:=\;\int\displaylimits_{\substack{\sum_{i=1}^n s_i=t\\ s_i\ge 0}}
\exp\left(\sum_{i=1}^n s_i a_i\right)\,ds_1\dots ds_n\ \ \ge\ 0.
\end{equation}
Therefore, the desired property for $n \geq 1$ follows from Lemma~\ref{lem:HG-AD}.
The property for $n=0$ is proved in Corollary~\ref{cor:K0-TN2}.

\end{proof}

\begin{theorem}[Supermodularity]
Let \(n\ge 0\) and integers \(p,q\ge 1\). Then
\[
K_n(x_1,y_1)+K_n(x_2,y_2)\ \ge\ K_n(x_1,y_2)+K_n(x_2,y_1)
\qquad\text{for all }x_1\le x_2,\ y_1\le y_2.
\]
\end{theorem}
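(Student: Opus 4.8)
The plan is to exploit the same double-integral representation used for log-submodularity, but now to track the \emph{additive} mixed second difference rather than a multiplicative one. Combining \Cref{eq:Kn-mixture,eq:K0HG} gives
\[
K_n(x,y)=\frac{1}{(p-1)!(q-1)!}\int_0^1\!\!\int_0^1 g_n(1-c)\,\tau^{p-1}(1-\tau)^{q-1}\,e^{c\tau x}\,e^{c(1-\tau)y}\,d\tau\,dc,
\]
and the key observation is that the exponential kernel \emph{factorizes} as $e^{c\tau x}\cdot e^{c(1-\tau)y}$, a product of a function of $x$ alone and a function of $y$ alone.

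First I would form the mixed second difference
\[
\Delta:=K_n(x_2,y_2)-K_n(x_2,y_1)-K_n(x_1,y_2)+K_n(x_1,y_1),
\]
and move the four finite differences inside the integral. Because the kernel splits as above, its mixed second difference factors as
\[
\bigl(e^{c\tau x_2}-e^{c\tau x_1}\bigr)\bigl(e^{c(1-\tau)y_2}-e^{c(1-\tau)y_1}\bigr).
\]
The remaining step is a sign check. Since $x_1\le x_2$ and $c\tau\ge 0$ on $[0,1]^2$, the first factor is nonnegative; since $y_1\le y_2$ and $c(1-\tau)\ge 0$, so is the second. The external weights $g_n(1-c)\ge 0$ (by \Cref{eq:gn}), $\tau^{p-1}\ge 0$, and $(1-\tau)^{q-1}\ge 0$ are all nonnegative, so the integrand of $\Delta$ is pointwise nonnegative and therefore $\Delta\ge 0$. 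Rearranging $\Delta\ge 0$ is exactly the claimed supermodularity inequality.

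I do not expect a serious obstacle here: in contrast to log-submodularity, where the multiplicative $2\times 2$ structure forced us to invoke the Four Functions Theorem, supermodularity follows directly from the \emph{product} form of the exponential kernel, which turns the additive mixed difference into a product of two one-dimensional monotone differences. The only point requiring minor care is the interchange of the finite differences with the double integral, which is immediate since the integral is over the finite measure on the compact set $[0,1]^2$ and the integrand is bounded and continuous.
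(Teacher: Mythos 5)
Your proof is correct, but it takes a genuinely different route from the paper. The paper's argument is a one-liner in differential form: since $K_n$ is twice continuously differentiable, supermodularity is equivalent to $\partial_{xy}K_n\ge 0$, and the differentiation identity for repeated arguments gives $\partial_{xy}K_n(x,y)=p\,q\,\exp[a_1,\ldots,a_n,x^{(p+1)},y^{(q+1)}]>0$ by positivity of exponential divided differences. Your argument instead works in finite-difference form, reusing the mixture representation from the log-submodularity section and observing that the kernel $e^{c\tau x}e^{c(1-\tau)y}$ factorizes, so the mixed second difference becomes a product of two nonnegative one-dimensional differences; this is pointwise nonnegative and integrates to the claim. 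What your approach buys is a conceptual explanation of why supermodularity is strictly easier than log-submodularity (the additive $2\times 2$ structure factorizes, so no Four Functions Theorem is needed), and it avoids invoking the repeated-argument differentiation identity; what the paper's approach buys is brevity and no need for the integral machinery at all. One caveat: the representation in \Cref{eq:Kn-mixture} is established by the paper only for $n\ge 1$ (for $n=0$ the density $g_n$ of \Cref{eq:gn} degenerates to a point mass at $c=1$, which is exactly why the paper handles $n=0$ separately in its log-submodularity proof). Your argument covers $n=0$ with the same factorization applied directly to the single-integral representation \Cref{eq:K0HG}, so the fix is immediate, but as written the case split should be acknowledged.
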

\begin{proof}
Since $K_n(x,y)$ is twice continuously differentiable, supermodularity is equivalent to 
$\partial_{xy}K_n(x,y)\ge 0$ for all $(x,y)\in\mathbb{R}^2$.
Using the differentiation identity for repeated arguments,
\[
\partial_{xy}K_n(x,y)=p\,q\,\exp[a_1,\ldots,a_n,x^{(p+1)},y^{(q+1)}]\;>\;0.
\]
Hence $K_n$ is supermodular.
\end{proof}

\section{A Four-Point Inequality for Divided Differences}
\label{sec:fourpoint}

Let $\exp[x_0,x_1,\dots,x_n]$ denote the divided differences of the exponential function, and
\bea
f(a,b,c,d) &=& \exp[a,a,b,c]\exp[d,d,b,c] + \exp[b,b,a,d]\exp[c,c,a,d] - \exp[a,b,c,d]^2, \\
h(x,y) &=& \exp[x,x,y,y](x-y)^2 = \e^x + \e^y - 2\exp[x,y].
\eea
Equivalently,
\beq
\label{eq:h-phi}
h(x,y)=2\e^{(x+y)/2}\,\phi\!\left(\frac{x-y}{2}\right),\qquad
\phi(u):=\cosh u-\frac{\sinh u}{u}\quad\text{with}\ \ \phi(0):=0.
\eeq

We prove the four-point inequality
\[
\exp[a,a,b,c]\exp[d,d,b,c] + \exp[b,b,a,d]\exp[c,c,a,d] - \exp[a,b,c,d]^2 \geq 0,
\]
which holds for all $a,b,c,d \in\mathbb{R}$.

\begin{lemma}
\label{lemma:h}
\beq
f(a,b,c,d) = \frac{h(a,c)h(b,d)+h(a,b)h(c,d)-h(b,c)h(a,d)}{2(a-b)(a-c)(b-d)(c-d)}.
\label{eq:statement1}
\eeq
\end{lemma}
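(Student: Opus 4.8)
The plan is to verify the identity by expanding both sides in a common basis of exponential monomials and matching coefficients. First I would reduce to the generic case in which $a,b,c,d$ are pairwise distinct: $f(a,b,c,d)$ is an entire function of $(a,b,c,d)\in\mathbb{R}^4$ (divided differences of $e^x$ depend analytically on their nodes, e.g.\ through the Hermite--Genocchi representation), while the right-hand side is analytic on the open set where $(a-b)(a-c)(b-d)(c-d)\neq0$. Since the pairwise-distinct configurations form a dense subset, it suffices to prove the identity there, the remaining cases following by continuity of $f$.

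For distinct nodes, every factor appearing is an $\mathbb{R}$-linear combination of exponentials with coefficients rational in the nodes: $h(x,y)=e^{x}+e^{y}-2\,\frac{e^{x}-e^{y}}{x-y}$ is a combination of $e^{x},e^{y}$, and each confluent four-point difference $\exp[a,a,b,c]$ is a combination of $e^{a},e^{b},e^{c}$, where the coefficient of $e^{a}$ carries the extra derivative contribution from the double node (concretely $\tfrac{1}{(a-b)(a-c)}-\tfrac{2a-b-c}{(a-b)^{2}(a-c)^{2}}$), while the coefficients of $e^{b},e^{c}$ are the usual $\tfrac{1}{(b-a)^{2}(b-c)}$ and $\tfrac{1}{(c-a)^{2}(c-b)}$. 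Consequently both sides lie in the span of the ten ``pair-exponentials'' $e^{x+y}$ with $x,y\in\{a,b,c,d\}$. These ten functions are linearly independent over the field of rational functions in $(a,b,c,d)$: restricting to a generic line $(a,b,c,d)=\mathbf{p}+t\mathbf{v}$ turns any such relation into $\sum_k r_k(t)\,e^{t\lambda_k}=0$ with distinct reals $\lambda_k$ and rational $r_k$, forcing each $r_k\equiv0$. Hence the identity is equivalent to matching the rational-function coefficient of each $e^{x+y}$ on the two sides.

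To organize the ten comparisons I would exploit symmetry. Both $f$ and the right-hand side are invariant under the Klein four-group $\{\mathrm{id},(a\,b)(c\,d),(a\,c)(b\,d),(a\,d)(b\,c)\}$: the denominator $(a-b)(a-c)(b-d)(c-d)$ and the numerator $h(a,c)h(b,d)+h(a,b)h(c,d)-h(a,d)h(b,c)$ are each invariant, as is $f$. The ten monomials then split into four orbits: the four squares $e^{2a},\dots,e^{2d}$, and the three cross-type orbits $\{e^{a+b},e^{c+d}\}$, $\{e^{a+c},e^{b+d}\}$, $\{e^{a+d},e^{b+c}\}$. The right-hand side contains no square term, so one must check that the coefficient of $e^{2a}$ in $f$ vanishes; this is immediate, since it equals $\beta\gamma-\rho^{2}$, where $\beta=\tfrac{1}{(a-b)^{2}(a-d)}$ and $\gamma=\tfrac{1}{(a-c)^{2}(a-d)}$ are the $e^{a}$-coefficients arising from $\exp[b,b,a,d]$ and $\exp[c,c,a,d]$, and $\rho=\tfrac{1}{(a-b)(a-c)(a-d)}$ is the $e^{a}$-coefficient of $\exp[a,b,c,d]$, so $\beta\gamma=\rho^{2}$. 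It then remains to verify the three cross-term identities, one representative per orbit.

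The main obstacle is precisely this last step: each cross-term identity, after substituting the explicit coefficients above and clearing the common denominator $(a-b)(a-c)(b-d)(c-d)$, reduces to a polynomial identity in the node differences that must be checked by direct expansion. The delicate points are retaining the derivative contribution in the $e^{a}$-type coefficients of the confluent differences, and correctly tracking the asymmetric roles of the pairs $\{a,d\}$ and $\{b,c\}$, which are exactly the two differences absent from the denominator and which carry the minus sign in the numerator. Once these three polynomial cancellations are confirmed, all ten coefficients agree and the identity follows.
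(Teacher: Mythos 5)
Your framework is legitimate and genuinely different from the paper's: you expand both sides in the ten pair-exponentials $e^{x+y}$, $x,y\in\{a,b,c,d\}$, invoke their linear independence over the field of rational functions, and use invariance under the Klein four-group $\{\mathrm{id},(ab)(cd),(ac)(bd),(ad)(bc)\}$ to cut ten coefficient comparisons down to four. The scaffolding is all correct: the reduction to pairwise-distinct nodes by continuity is valid, your residue formulas for the confluent coefficients (including the double-pole correction $\tfrac{1}{(a-b)(a-c)}-\tfrac{2a-b-c}{(a-b)^2(a-c)^2}$ for the $e^a$-coefficient of $\exp[a,a,b,c]$) are right, both sides are indeed invariant under that group with the orbits you list, and your check that the $e^{2a}$-coefficient of $f$ vanishes (via $\beta\gamma=\rho^2$, the first product contributing nothing) is correct and matches the absence of square terms on the right-hand side.

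The genuine gap is that the argument stops exactly where the content of the lemma lies: the three cross-orbit identities (coefficients of $e^{a+b}$, $e^{a+c}$, $e^{a+d}$) are asserted but never carried out, and you yourself flag them as ``the main obstacle.'' These are not routine bookkeeping. For instance, the $e^{a+b}$ comparison requires showing that
$P_a\cdot\tfrac{1}{(b-d)^2(b-c)}+Q_b\cdot\tfrac{1}{(a-c)^2(a-d)}-\tfrac{2}{(a-b)(a-c)(a-d)(b-a)(b-c)(b-d)}$,
with $P_a,Q_b$ the double-pole coefficients above, equals
$\Bigl(\tfrac{(a-c-2)(b-d-2)}{(a-c)(b-d)}-\tfrac{(b-c-2)(a-d-2)}{(b-c)(a-d)}\Bigr)\big/\bigl(2(a-b)(a-c)(b-d)(c-d)\bigr)$,
and two more identities of comparable size; until these cancellations are exhibited, the lemma is not proved. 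It is worth noting that the paper sidesteps this rational-function expansion entirely: it uses Newton's recursion to write each confluent difference in terms of $e^x$'s and two-point differences $\exp[x,y]$, multiplies out, and obtains the intermediate identity (\ref{eq:statement1a}), whose terms regroup into $h$-products at sight because $h(x,y)=e^x+e^y-2\exp[x,y]$ is itself built from these objects. If you complete your three expansions your argument stands as a valid alternative proof, but as written it is a plan rather than a proof.
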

\begin{proof}
We have
\begin{multline}
f(a,b,c,d) = 
\frac{\e^a + \exp[b,c] - \exp[a,c] - \exp[a,b]}{(a-b)(a-c)}\times\frac{\e^d + \exp[b,c] - \exp[b,d] - \exp[c,d]}{(b-d)(c-d)}
+ \\ +
\frac{\e^b + \exp[a,d] - \exp[a,b] - \exp[b,d]}{(a-b)(b-d)}\times\frac{\e^c + \exp[a,d] - \exp[a,c] - \exp[c,d]}{(a-c)(c-d)}
- \\ -
\frac{\exp[a,c]-\exp[a,d]-\exp[b,c]+\exp[b,d]}{(a-b)(c-d)}\times\frac{\exp[a,b]+\exp[c,d]-\exp[a,d]-\exp[b,c]}{(a-c)(b-d)}.
\end{multline}
It follows that
\begin{multline}
f(a,b,c,d)(a-b)(a-c)(b-d)(c-d) = 
2\exp[a,c]\exp[b,d] - (\e^a+\e^c)\exp[b,d] - (\e^b+\e^d)\exp[a,c]
+ \\ +
2\exp[a,b]\exp[c,d] - (\e^a+\e^b)\exp[c,d] - (\e^c+\e^d)\exp[a,b] + \e^{b+c} + \e^{a+d} - \\ -
\left(
2\exp[a,d]\exp[b,c] - (\e^b+\e^c)\exp[a,d] - (\e^a+\e^d)\exp[b,c]
\right).
\label{eq:statement1a}
\end{multline}
Eq.~(\ref{eq:statement1}) follows from Eq.~(\ref{eq:statement1a}).
\end{proof}

\begin{lemma}[Triangular inequality]
\beq
h(a,c) \geq h(a,b) + h(b,c) \qquad\text{ when }\quad a\leq b\leq c.
\eeq
\end{lemma}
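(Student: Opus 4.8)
The plan is to strip off the definition of $h$, reduce the claim to a clean two-point inequality, and then split that inequality into two independent single-variable estimates that happen to meet at a common constant. First I would substitute $h(x,y)=e^{x}+e^{y}-2\exp[x,y]$ into all three terms. The $e^{a}$ and $e^{c}$ contributions cancel, and $h(a,c)\ge h(a,b)+h(b,c)$ collapses to the equivalent statement
\[
\exp[a,b]+\exp[b,c]-\exp[a,c]\ \ge\ e^{b},\qquad a\le b\le c.
\]
It suffices to treat $a<b<c$, extending to coincident nodes afterward by continuity of the divided differences.

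Next, using $\exp[x,y]=(e^{y}-e^{x})/(y-x)$ and writing $\alpha=b-a$, $\gamma=c-b$, $P=e^{b}-e^{a}$, $Q=e^{c}-e^{b}$ (so that $\exp[a,c]=(P+Q)/(\alpha+\gamma)$ since $e^{c}-e^{a}=P+Q$ and $c-a=\alpha+\gamma$), the left-hand combination telescopes to $\tfrac{P}{\alpha}+\tfrac{Q}{\gamma}-\tfrac{P+Q}{\alpha+\gamma}=\tfrac{P\gamma^{2}+Q\alpha^{2}}{\alpha\gamma(\alpha+\gamma)}$. Dividing the target inequality by $e^{b}$ — which eliminates $b$, reflecting the translation invariance $h(x+t,y+t)=e^{t}h(x,y)$ — turns it into the $b$-free inequality
\[
\gamma^{2}\bigl(1-e^{-\alpha}-\alpha\bigr)+\alpha^{2}\bigl(e^{\gamma}-1-\gamma\bigr)\ \ge\ 0,\qquad \alpha,\gamma\ge 0.
\]

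Finally, for $\alpha,\gamma>0$ I would rewrite this as $\dfrac{e^{\gamma}-1-\gamma}{\gamma^{2}}\ \ge\ \dfrac{\alpha-1+e^{-\alpha}}{\alpha^{2}}$ and observe that the two sides decouple. The right-hand side obeys $\tfrac{\alpha-1+e^{-\alpha}}{\alpha^{2}}\le\tfrac12$, equivalent to $e^{-\alpha}\le 1-\alpha+\tfrac{\alpha^{2}}{2}$, while the left-hand side obeys $\tfrac{e^{\gamma}-1-\gamma}{\gamma^{2}}\ge\tfrac12$, equivalent to $e^{\gamma}\ge 1+\gamma+\tfrac{\gamma^{2}}{2}$; each is a standard Taylor estimate proved by differentiating twice and noting that the resulting second derivative keeps a constant sign on $[0,\infty)$. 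The crux — and the one place that is not purely mechanical — is precisely this decoupling: the supremum over $\alpha$ of the right side and the infimum over $\gamma$ of the left side both equal $\tfrac12$ (attained only in the limit $\alpha,\gamma\to0$), so the two independent expressions straddle the single constant $\tfrac12$ and can be chained. Once that common value is spotted, each half is a routine monotonicity check, and the original triangular inequality follows.
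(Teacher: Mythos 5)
Your proof is correct, but it takes a genuinely different route from the paper's. After the same first reduction (cancelling $e^a$ and $e^c$ and rewriting the claim as $\exp[a,b]+\exp[b,c]-\exp[a,c]\ge e^b$), you switch to the explicit two-point formula: the telescoping to $(P\gamma^2+Q\alpha^2)/(\alpha\gamma(\alpha+\gamma))$ is right, dividing by $e^b$ correctly produces the $b$-free inequality $\gamma^2\,(1-e^{-\alpha}-\alpha)+\alpha^2\,(e^{\gamma}-1-\gamma)\ge 0$, and your decoupling
\[
\frac{e^{\gamma}-1-\gamma}{\gamma^{2}}\ \ge\ \frac{1}{2}\ \ge\ \frac{\alpha-1+e^{-\alpha}}{\alpha^{2}},
\qquad \alpha,\gamma>0,
\]
holds because the two halves are exactly the standard second-order Taylor bounds $e^{\gamma}\ge 1+\gamma+\gamma^{2}/2$ and $e^{-\alpha}\le 1-\alpha+\alpha^{2}/2$, each verified by two differentiations; the continuity argument covers coincident nodes. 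The paper, by contrast, never leaves the divided-difference calculus: it applies Newton's recursion twice to obtain the exact identity $h(a,c)-h(a,b)-h(b,c)=2(b-a)(c-b)\,\exp[a,b,b,c]$ and concludes from positivity of exponential divided differences. That identity buys more: it is a four-line computation, it exhibits the deficit as a single positive divided difference (so the equality cases $a=b$ or $b=c$ are immediate), and it generalizes verbatim to any function whose third-order divided differences are nonnegative, i.e.\ any $f$ with $f'''\ge 0$. Your argument is more elementary---nothing beyond the two-point formula and calculus---but it is tied to the exponential through the explicit Taylor estimates and yields only the inequality, not the exact form of the remainder.
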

\begin{proof}
Since $h(x,y) = e^x + e^y - 2\exp[x,y]$, we have
\begin{align*}
h(a,c)-h(a,b)-h(b,c)
&=2\big[(\exp[a,b]-\exp[a,c])+(\exp[b,c]-\exp[b,b])\big]\\
&=2\big[(b-c)\exp[a,b,c]+(c-b)\exp[b,b,c]\big]\\
&=2(c-b)\big(\exp[b,b,c]-\exp[a,b,c]\big)\\
&=2(b-a)(c-b)\,\exp[a,b,b,c]\;\ge\;0.
\end{align*}
Here, we used Newton's recursion for divided differences, the positivity of exponential divided differences, and $a\le b\le c$.
\end{proof}

\begin{lemma}
\label{lem:phi}
\beq
\label{ineq:phi}
\phi(x+y)\phi(y+z)\ \ge\ \phi(x)\phi(z) + \phi(y)\phi(x+y+z) \qquad \text{for all}\quad x,y,z\ge 0.
\eeq
\end{lemma}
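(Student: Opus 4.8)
My plan is to eliminate the asymmetry by reparametrizing, reduce the claim to a one–dimensional concavity, and finally obtain that concavity from a monotonicity property of $\phi$ proved by an energy/Riccati argument.

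\medskip
\noindent\emph{Step 1 (reparametrize so the boundary is trivial).}
I would set $a:=x+y+z\ge 0$, so that $x+y=a-z$, $y+z=a-x$ and $y=a-x-z$. The claimed inequality is then equivalent to $F(x,z)\ge 0$ on the triangle $\mathcal T=\{x,z\ge 0,\ x+z\le a\}$, where
\[
F(x,z):=\phi(a-x)\,\phi(a-z)-\phi(x)\,\phi(z)-\phi(a)\,\phi(a-x-z).
\]
A direct substitution shows $F$ vanishes identically on each edge $x=0$, $z=0$, $x+z=a$, so the problem becomes: propagate $F\ge 0$ from the boundary into the interior.

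\medskip
\noindent\emph{Step 2 (slice to one variable).}
The idea is to foliate $\mathcal T$ by the segments $x+z=s$ $(0\le s\le a)$ and to show each slice $g_s(x):=F(x,s-x)$ is concave on $[0,s]$; since $g_s(0)=g_s(s)=0$, concavity forces $g_s\ge 0$, hence $F\ge 0$. With $\theta:=a-s\ge 0$ one has $g_s(x)=\phi(x+\theta)\phi(s-x+\theta)-\phi(x)\phi(s-x)-\phi(\theta)\phi(s+\theta)$, so that
\[
g_s''(x)=Q(x+\theta,\,s-x+\theta)-Q(x,\,s-x),\qquad Q(p,q):=\phi''(p)\phi(q)-2\phi'(p)\phi'(q)+\phi(p)\phi''(q).
\]
Thus every slice is concave provided $Q$ is nonincreasing along the diagonal direction, i.e. $(\partial_p+\partial_q)Q\le 0$ for all $p,q>0$.

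\medskip
\noindent\emph{Step 3 (use a differential identity to get a scalar inequality).}
Here the computation collapses thanks to the identity $\phi''(u)=(1+2u^{-2})\phi(u)$, which is immediate from $\phi(u)=\cosh u-\sinh u/u$ (or from the coefficients $c_m=2m/(2m+1)!$ in $\phi(u)=\sum_{m\ge1}c_m u^{2m}$), and which yields $\phi'''=(1+2u^{-2})\phi'-4u^{-3}\phi$. Substituting and dividing by $2\phi(p)\phi(q)>0$, the condition $(\partial_p+\partial_q)Q\le 0$ becomes the scalar statement
\[
\Big(\tfrac1{p^2}-\tfrac1{q^2}\Big)\big(\ell(q)-\ell(p)\big)+2\Big(\tfrac1{p^3}+\tfrac1{q^3}\Big)\ge 0,\qquad \ell:=\phi'/\phi .
\]
Using $\big(\tfrac1{p^3}+\tfrac1{q^3}\big)/\big(\tfrac1{p^2}-\tfrac1{q^2}\big)=\tfrac1p-\tfrac1q+\tfrac1{q-p}$, this is equivalent to $m(p)-m(q)\le \tfrac{2}{q-p}$ for $p<q$, where $m(t):=\ell(t)-2/t$; in particular it suffices to prove that $m$ is nondecreasing on $(0,\infty)$.

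\medskip
\noindent\emph{Step 4 (the crux: monotonicity of $m$).}
The hard part is this last, second–order claim, since $m'\ge 0$ is equivalent to the upper bound $(\phi')^2\le (1+4t^{-2})\phi^2$. I would prove it by an energy argument. Set $\mathcal E(t):=(1+4t^{-2})\phi(t)^2-\phi'(t)^2$; a short calculation using $\phi''=(1+2u^{-2})\phi$ gives
\[
\mathcal E'(t)=\frac{4}{t^{3}}\,\phi(t)\,\big(t\phi'(t)-2\phi(t)\big).
\]
The factor $t\phi'(t)-2\phi(t)=\sum_{m\ge 2}2(m-1)c_m t^{2m}$ has nonnegative Taylor coefficients, hence is $\ge 0$; therefore $\mathcal E'\ge 0$, and since $\mathcal E(0^{+})=0$ we conclude $\mathcal E\ge 0$, i.e. $m'\ge 0$. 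This closes the chain Step~4 $\Rightarrow$ Step~3 $\Rightarrow$ Step~2 $\Rightarrow$ Step~1. I expect the two genuinely clever points to be the foliation of Step~2 (choosing the slicing that makes both endpoints vanish, converting a two–variable boundary value problem into one–dimensional concavity) and the energy identity of Step~4 (bootstrapping the elementary first–order bound $t\phi'\ge 2\phi$ into the required second–order bound); the identity $\phi''=(1+2u^{-2})\phi$ is what keeps every intervening differentiation routine.
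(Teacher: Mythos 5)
Your proof is correct, and it takes a genuinely different route from the paper's. I verified the main computations: $F$ does vanish on all three edges of the triangle; $g_s''(x)=Q(x+\theta,\,s-x+\theta)-Q(x,\,s-x)$; substituting $\phi''=(1+2u^{-2})\phi$ and $\phi'''=(1+2u^{-2})\phi'-4u^{-3}\phi$ collapses $(\partial_p+\partial_q)Q$ to $2\bigl(p^{-2}-q^{-2}\bigr)\bigl(\phi'(p)\phi(q)-\phi(p)\phi'(q)\bigr)-4\bigl(p^{-3}+q^{-3}\bigr)\phi(p)\phi(q)$, which after division by $2\phi(p)\phi(q)>0$ and your partial-fraction identity is exactly $m(p)-m(q)\le 2/(q-p)$ for $p<q$; and the energy identity $\mathcal E'(t)=4t^{-3}\phi(t)\bigl(t\phi'(t)-2\phi(t)\bigr)$ together with $t\phi'-2\phi\ge 0$ and $\mathcal E(0^{+})=0$ (both checked against the series $\phi(u)=\sum_{m\ge1}\frac{2m}{(2m+1)!}u^{2m}$) is sound. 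The paper instead fixes $x,z$ and proves a \emph{first-order} statement: $F(y):=\phi(x+y)\phi(y+z)-\phi(y)\phi(x+y+z)$ is nondecreasing in $y$, because $F'(y)=G_s(y+z)-G_s(y)$ with $s=x+2y+z$ and $G_s(t):=\phi'(s-t)\phi(t)+\phi(s-t)\phi'(t)$; the same identity $\phi''=(1+2u^{-2})\phi$ gives $G_s'(t)=2\phi(s-t)\phi(t)\bigl(t^{-2}-(s-t)^{-2}\bigr)$, so $G_s$ is symmetric about $s/2$ and unimodal, and since $y+z$ lies no farther from $s/2$ than $y$ does, $F'\ge 0$ and $F(y)\ge F(0)=\phi(x)\phi(z)$. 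So where the paper needs one differentiation plus a symmetry/unimodality comparison, you differentiate twice (concavity of the slices at fixed $x+y+z$, propagating positivity inward from the triangle's edges) and must then establish a second-order fact, the Riccati-type bound $(\phi'/\phi)^2\le 1+4t^{-2}$. Your route is longer, but it buys two things the paper's does not: the reduction of the three-variable inequality to a purely one-dimensional ODE statement about $\phi$ (rather than a two-point comparison whose pivot $s/2$ involves all three variables), and the log-derivative bound, i.e.\ the monotonicity of $\phi'/\phi-2/t$, which is of independent interest.
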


\begin{proof}
Let $F(y)=\phi(x+y)\phi(y+z)-\phi(y)\phi(x+y+z)$.
We have 
\begin{equation}\label{eq:Fprime-G}
F'(y)=G_s(y+z)-G_s(y),
\end{equation}
where $s=x+2y+z$ and $G_s(t):=\phi'(s-t)\phi(t)+\phi(s-t)\phi'(t)$ for $t\in[0,s]$.

Since $\phi''(u) = (1+2/u^2)\phi(u)$, we have
\[
G_s'(t) = \phi(s-t)\phi(t)\left(\frac{\phi''(t)}{\phi(t)}-\frac{\phi''(s-t)}{\phi(s-t)}\right) 
= 2\;\phi(s-t)\;\phi(t)\left(t^{-2}-(s-t)^{-2}\right).
\]
Therefore $G_s$ is strictly increasing on $[0,s/2]$, strictly decreasing on $[s/2,s]$,
and attains its maximum at $t=s/2$.
Moreover, $G_s$ is symmetric about $s/2$ since $G_s(s-t)=G_s(t)$.

Recall $s=x+2y+z$. Then
\[
\left|\,y-\frac{s}{2}\,\right|=\frac{x+z}{2},\qquad
\left|\,y+z-\frac{s}{2}\,\right|=\frac{|x-z|}{2}\;\le\;\frac{x+z}{2}.
\]
Thus $y+z$ lies no farther from the maximizer $s/2$ than $y$ does.
By the symmetry and unimodality of $G_s$, this implies
\[
G_s(y+z)\;\ge\;G_s(y).
\]
In view of Eq.~(\ref{eq:Fprime-G}), we have $F'(y)\ge 0$ for all $y\ge 0$.
Since $F$ is nondecreasing on $[0,\infty)$ and $F(0)=\phi(x)\phi(z)$, we obtain Eq.~(\ref{ineq:phi}).
\end{proof}

\begin{lemma}
\label{lemma:hpositive}
For any $a<b<c<d$ one has
\[
h(a,c)h(b,d) \ge h(b,c)h(a,d) + h(a,b)h(c,d).
\]
\end{lemma}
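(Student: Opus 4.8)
The plan is to reduce the claimed inequality directly to Lemma~\ref{lem:phi}, using the representation in Eq.~(\ref{eq:h-phi}). First I would record that $\phi(u)=\cosh u-\sinh u/u$ is an even function, so that for any reals $x,y$ we may write $h(x,y)=2\e^{(x+y)/2}\,\phi\!\big(|x-y|/2\big)$. This turns every factor $h(\cdot,\cdot)$ appearing in the statement into an exponential prefactor times a value of $\phi$ at a nonnegative argument.

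Next I would introduce the half-gap variables $p:=(b-a)/2$, $q:=(c-b)/2$, and $r:=(d-c)/2$, all strictly positive since $a<b<c<d$. In these variables the six pairwise $\phi$-arguments read $|a-c|/2=p+q$, $|b-d|/2=q+r$, $|b-c|/2=q$, $|a-d|/2=p+q+r$, $|a-b|/2=p$, and $|c-d|/2=r$. The key observation is that each of the three products $h(a,c)h(b,d)$, $h(b,c)h(a,d)$, and $h(a,b)h(c,d)$ carries exactly the same exponential prefactor $4\e^{(a+b+c+d)/2}$, because in each grouping the four indices $a,b,c,d$ each occur once in the sum of the two means. Dividing through by this common positive factor, the inequality collapses to
\[
\phi(p+q)\,\phi(q+r)\ \ge\ \phi(p)\,\phi(r)+\phi(q)\,\phi(p+q+r),
\]
which is precisely Lemma~\ref{lem:phi} with $(x,y,z)=(p,q,r)$.

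I expect no genuine obstacle, since the analytic substance is already contained in Lemma~\ref{lem:phi}; the remaining work is purely bookkeeping. The two points requiring mild care are verifying that the three exponential prefactors really coincide (so that they cancel), and confirming that the pairing lines up correctly—namely that the ``innermost'' pair $h(b,c)$ and the ``outermost'' pair $h(a,d)$ combine to give the term $\phi(q)\,\phi(p+q+r)$ on the right-hand side, matching the $\phi(y)\,\phi(x+y+z)$ slot of Lemma~\ref{lem:phi}, rather than being grouped otherwise. Both checks follow immediately from the gap decomposition above, and strictness of $a<b<c<d$ is irrelevant beyond guaranteeing $p,q,r\ge 0$, so the weak inequality of Lemma~\ref{lem:phi} suffices.
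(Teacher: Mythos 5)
Your proof is correct and follows essentially the same route as the paper: both substitute the half-gap variables $x=(b-a)/2$, $y=(c-b)/2$, $z=(d-c)/2$, use Eq.~(\ref{eq:h-phi}) to factor out the common prefactor $4\e^{(a+b+c+d)/2}$, and reduce the claim to Lemma~\ref{lem:phi}. Your explicit remarks on the evenness of $\phi$ and the matching of prefactors are just the bookkeeping the paper leaves implicit.
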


\begin{proof}
Let $x=(b-a)/2$, $y=(c-b)/2$, $z=(d-c)/2$. 
It follows from Eq.~(\ref{eq:h-phi}) that
\beq
h(a,c)h(b,d)-h(b,c)h(a,d)-h(a,b)h(c,d) = 4 e^{(a+b+c+d)/2} Q(x,y,z),
\eeq
where
\beq
Q(x,y,z) = \phi(x+y)\phi(y+z) - \phi(y)\phi(x+y+z) - \phi(x)\phi(z).
\eeq
It follows from Lemma~\ref{lem:phi} that 
\[
Q(x,y,z)\ \ge\ 0,
\]
and therefore the lemma follows.
\end{proof}

\begin{theorem}[Four-point inequality]
For all $a,b,c,d \in \mathbb{R}$ one has
\beq
\label{eq:inequality}
\exp[a,a,b,c]\exp[d,d,b,c] + \exp[b,b,a,d]\exp[c,c,a,d] - \exp[a,b,c,d]^2 \geq 0.
\eeq
\end{theorem}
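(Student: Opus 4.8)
The plan is to reduce the theorem to the two facts already established: the algebraic identity of Lemma~\ref{lemma:h}, which writes $f(a,b,c,d)$ as a single ratio, and the subadditivity estimate of Lemma~\ref{lemma:hpositive}. Since divided differences depend continuously on their nodes, $f$ is continuous on $\mathbb{R}^4$, so it suffices to prove $f(a,b,c,d)\ge 0$ for pairwise distinct $a,b,c,d$ and then recover the coincident cases by passing to the limit. For distinct nodes the denominator in Lemma~\ref{lemma:h} is nonzero, so the sign of $f$ is the sign of $N/\Delta$, where
\[
N:=h(a,c)h(b,d)+h(a,b)h(c,d)-h(b,c)h(a,d),\qquad \Delta:=(a-b)(a-c)(b-d)(c-d).
\]

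The key observation is structural. The three products in $N$ run over the three perfect matchings of $\{a,b,c,d\}$, namely $\{ac,bd\}$, $\{ab,cd\}$, and $M_\ast:=\{ad,bc\}$, with the last carrying the minus sign; meanwhile $\Delta$ is precisely the product of the four differences over the pairs complementary to $M_\ast$. The function $f$ is invariant under the order-$8$ subgroup $G\subset S_4$ generated by $b\leftrightarrow c$, $a\leftrightarrow d$, and the double swap $(a\,b)(c\,d)$ (each leaves $f$ fixed because divided differences are symmetric in their arguments); in fact $G$ is exactly the stabilizer in $S_4$ of the matching $M_\ast$, so the ``type'' of $M_\ast$ relative to the sorted order is a $G$-invariant. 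Relabeling the distinct nodes in increasing order, $M_\ast$ is either nested, crossing, or adjacent, and these three possibilities are exactly the three $G$-orbits. Thus I would check one representative sorted order in each case, for instance $a<b<c<d$ (nested), $a<b<d<c$ (crossing), and $a<d<b<c$ (adjacent).

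In each case the sign of $N$ follows from Lemma~\ref{lemma:hpositive}, which, read in the sorted order, states that the product over the \emph{crossing} matching dominates the sum of the products over the other two. When $M_\ast$ is the crossing matching (order $a<b<d<c$) this forces $N\le 0$; when $M_\ast$ is nested or adjacent (orders $a<b<c<d$ and $a<d<b<c$) it forces $N\ge 0$, since then the dominating crossing product appears with the $+$ sign in $N$. The sign of $\Delta$ is read off by counting negative factors: one finds $\Delta>0$ in the nested and adjacent cases and $\Delta<0$ in the crossing case. In all three the signs of $N$ and $\Delta$ agree, so $f=N/(2\Delta)\ge 0$, and the theorem follows once the distinctness assumption is removed by continuity.

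I expect the main obstacle to be bookkeeping rather than any new analytic input: one must track carefully which matching is negated in $N$ and which four differences constitute $\Delta$, and — crucially — recognize that $f$ is invariant only under the order-$8$ stabilizer of $M_\ast$, not under all of $S_4$. Consequently the problem genuinely splits into the three cases above and cannot be collapsed to a single ``without loss of generality'' ordering; the payoff is that the identity of Lemma~\ref{lemma:h} and the single inequality of Lemma~\ref{lemma:hpositive} suffice once the sign of $\Delta$ is matched to the sign of $N$ in each orbit.
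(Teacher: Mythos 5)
Your proposal is correct and follows essentially the same route as the paper: both use the identity of Lemma~\ref{lemma:h}, the invariance of $f$ under the order-$8$ stabilizer of the matching $\{ad,bc\}$ to reduce to three representative orderings (your nested/crossing/adjacent cases are exactly the paper's $f(\alpha,\beta,\gamma,\delta)$, $f(\alpha,\beta,\delta,\gamma)$, $f(\alpha,\gamma,\delta,\beta)$), then Lemma~\ref{lemma:hpositive} together with $h\ge 0$ and the sign of the denominator in each case, with coincident nodes recovered by continuity. Your write-up merely makes the sign bookkeeping of $N$ and $\Delta$ more explicit than the paper's terser "the claim follows directly from Lemmas~\ref{lemma:h} and~\ref{lemma:hpositive} in each case."
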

\begin{proof}
$f(a,b,c,d)$ is invariant under the eight-element subgroup $G$ of the symmetric group $S_4$ acting on $\{a,b,c,d\}$:
$G=\{\mathrm{id},\ (ad),\ (bc),\ (ad)(bc),\ (ab)(cd),\ (ac)(bd),\ (abdc),\ (acdb)\}$.
Hence the $24$ permutations of $(a,b,c,d)$ split into three $G$-orbits.
Let $\alpha<\beta<\gamma<\delta$ be the increasing rearrangement of $\{a,b,c,d\}$.
It suffices to verify that 
$f(\alpha,\beta,\gamma,\delta) \ge0$, $f(\alpha,\gamma,\delta,\beta) \ge0$, and $f(\alpha,\beta,\delta,\gamma) \ge0$.
Since $h(x,y)\ge 0$ for all $x,y\in\mathbb{R}$,
the claim follows directly from Lemmas~\ref{lemma:h} and~\ref{lemma:hpositive} in each case.
If some of the points coincide, the result follows by continuity of divided differences.
\end{proof}

\section{A Sharp Sandwich Bound for Exponential Divided Differences at Fixed Mean and Variance}
\label{sec:bounds}

Let $\Delta_n=\{\lambda\in[0,1]^{n+1}:\ \sum_{i=0}^n\lambda_i=1\}$ be the standard simplex,
and let $\gamma(n,z)=\int_0^z t^{n-1}e^{-t}\,dt$ denote the lower incomplete gamma function.
Throughout we assume $n\ge 1$.
Write
\begin{equation}
\label{eq:notations}
\mu:=\frac1{n+1}\sum_{i=0}^n x_i,\qquad
\sigma^2:=\frac1{n+1}\sum_{i=0}^n (x_i-\mu)^2,\qquad
a_0:=\sqrt{n}\,\sigma,\qquad a:=\frac{n+1}{\sqrt{n}}\,\sigma.
\end{equation}
\begin{equation}
L_n(\sigma) := n\,e^{-a_0}\,(-a)^{-n}\,\gamma\big(n,-a\big),\qquad
M_n(\sigma) := n\,e^{a_0}\,a^{-n}\,\gamma\big(n,a\big).
\end{equation}

\begin{theorem}[Sharp sandwich bound]\label{thm:sandwich}
For any $x_0,\dots,x_n\in\mathbb R$ with $n \ge 1$, 
mean $\mu$ and variance $\sigma^2$ as above,
\begin{equation}\label{eq:sandwich}
e^\mu\, L_n(\sigma)\;\le\;n!\,\exp[x_0,\dots,x_n]\;\le\;e^\mu\, M_n(\sigma).
\end{equation}
Moreover, within the class of vectors with mean $\mu$ and variance $\sigma^2$,
both inequalities are sharp: equality on the right holds for the
configuration $(\mu+a_0,\ \mu-a_0/n,\dots,\mu-a_0/n)$
(up to permutation), and equality on the left holds for the configuration
$(\mu-a_0,\ \mu+a_0/n,\dots,\mu+a_0/n)$ (up to permutation).
\end{theorem}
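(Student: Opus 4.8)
The plan is to pass to the Hermite–Genocchi probabilistic representation and then optimize. Writing $\lambda$ for a uniform random point of $\Delta_n$ (so that $n!\,d\lambda$ is its law), Hermite–Genocchi gives
\[
\Psi(x) := n!\,\exp[x_0,\dots,x_n] = \mathbb{E}\big[e^{\langle\lambda,x\rangle}\big],\qquad \langle\lambda,x\rangle=\sum_{i=0}^n\lambda_i x_i .
\]
Since $\langle\lambda,\mathbf 1\rangle=1$, the substitution $x_i\mapsto x_i+\mu$ multiplies $\Psi$ by $e^\mu$; hence I may assume $\mu=0$ and bound $\Psi$ over the sphere $S=\{x:\sum_i x_i=0,\ \sum_i x_i^2=(n+1)\sigma^2\}$. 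This $S$ is compact, so the extrema are attained, and $\Psi$ is smooth, permutation–symmetric, and convex (an average of the convex maps $x\mapsto e^{\langle\lambda,x\rangle}$). I would record at the outset the two target configurations and check by direct integration that they realize $e^\mu M_n(\sigma)$ and $e^\mu L_n(\sigma)$: for a singleton–plus–plateau vector the weight $\langle\lambda,x\rangle$ is an affine image of a single $\mathrm{Beta}(1,n)$ variable, and the resulting integral $\int_0^1 n(1-s)^{n-1}e^{(\cdot)}\,ds$ evaluates to the stated incomplete–gamma expressions; this simultaneously proves the sharpness clauses.

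Next I would set up Lagrange multipliers on $S$. For $\sigma>0$ every point of $S$ is regular (the constraint gradients $\mathbf 1$ and $2x$ are dependent only when $x\in\mathbb{R}\mathbf 1$, which is excluded), so at a constrained extremum there are $\alpha,\beta$ with $\partial_{x_k}\Psi=\alpha+2\beta x_k$ for all $k$. Using the differentiation rule $\partial_{x_k}\exp[x_0,\dots,x_n]=\exp[x_0,\dots,x_n,x_k]$, this reads
\[
G(x_k)=A+Bx_k\quad(k=0,\dots,n),\qquad G(v):=\exp[x_0,\dots,x_n,v],
\]
for constants $A,B$. So every node value $x_k$ is a solution of the one–variable equation $G(v)=A+Bv$. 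The crucial observation is that $G$ is convex: Hermite–Genocchi for the augmented node set represents $G(v)=\int_0^1 w(c)\,e^{cv}\,dc$ with $w\ge 0$ the marginal density of the weight assigned to the appended node, and a nonnegative superposition of the convex exponentials $v\mapsto e^{cv}$ is convex (in fact log–convex). Consequently $v\mapsto G(v)-(A+Bv)$ is convex and has at most two zeros, so the multiset $\{x_0,\dots,x_n\}$ takes at most two distinct values. For $\sigma>0$ the all–equal case is excluded, so every extremizer is a two–value configuration: some $m$ coordinates at a value $u$ and the remaining $n+1-m$ at $w$, with $m\in\{1,\dots,n\}$.

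It then remains to optimize over this finite list. For a two–value configuration the weight is $\langle\lambda,x\rangle=u\Lambda+w(1-\Lambda)$ with $\Lambda:=\sum_{i\in\text{first group}}\lambda_i\sim\mathrm{Beta}(m,n+1-m)$, and the mean/variance constraints fix $u,w$ in terms of $m$ and a sign, giving the closed form
\[
\Psi=e^{w}\,\frac{1}{B(m,n+1-m)}\int_0^1 t^{m-1}(1-t)^{n-m}\,e^{(u-w)t}\,dt .
\]
I would then show this is extremized, over $m$ and the sign, exactly at the partition $\{1,n\}$—the singleton–high vector for the maximum and the singleton–low vector for the minimum—whereupon the $m=1$ integral collapses to the $M_n,L_n$ formulas already computed; continuity of divided differences then extends the bounds to coincident nodes and to $\sigma=0$. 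I expect this last comparison across the partitions $\{m,n+1-m\}$ to be the main obstacle: unlike the two–value reduction, it is not settled by a single convexity argument—the competing configurations are not majorization–comparable, so Schur–convexity does not order them—and it will require a genuine monotonicity estimate on the Kummer/incomplete–gamma integrals above, most plausibly by differentiating in a continuous relaxation of $m$ or by a term–by–term comparison of the underlying series.
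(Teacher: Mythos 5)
Your reduction is sound as far as it goes: the Hermite--Genocchi representation, the translation to $\mu=0$, the sharpness computations for the two singleton-plus-plateau configurations (which do collapse to the stated incomplete-gamma expressions), and the Lagrange-multiplier step are all correct --- indeed $v\mapsto\exp[x_0,\dots,x_n,v]$ is strictly convex since its second derivative is $2\exp[x_0,\dots,x_n,v,v,v]>0$, so every constrained extremizer takes at most two distinct values. But the theorem is not proved until you show that among the two-valued configurations ($m$ coordinates at $u>0$, $n+1-m$ at $w<0$, so that $u-w=(n+1)\sigma/\sqrt{m(n+1-m)}$) the quantity $\Psi_m=e^{w}\,{}_1F_1(m;\,n+1;\,u-w)$ is largest at $m=1$ and smallest at $m=n$, and you explicitly leave this open. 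This is a genuine gap, not a routine verification: every $y\in\mathcal{S}$ gives the random weight $\langle\lambda,y\rangle$ the same mean $0$ and the same variance $\sigma^2/(n+2)$, so no argument based on the first two moments, on the convex order, or on Schur-convexity can separate the competing partitions; the comparison lives in the third and higher moments of the $\mathrm{Beta}(m,n+1-m)$ weights and requires a real monotonicity estimate on this Kummer/incomplete-gamma family. Without it, Eq.~(\ref{eq:sandwich}) is not established.

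Comparing with the paper: the published proof crosses this exact gap by a different route, namely Schur-convexity of $\Phi$ together with Lemma~\ref{lem:twolevel}, which asserts that $y^{+}$ majorizes \emph{every} $y\in\mathcal{S}$ and every $y\in\mathcal{S}$ majorizes $y^{-}$. You should know that your stated reason for avoiding this route --- ``the competing configurations are not majorization-comparable, so Schur-convexity does not order them'' --- is correct, and it in fact contradicts the paper's lemma: for $n\ge2$ the sorted partial sums are $S_k(y^{-})=k\,a_0/n$ and $S_k(y^{+})=a_0(n+1-k)/n$, so $S_k(y^{-})>S_k(y^{+})$ whenever $2k>n+1$ (e.g.\ $k=n$); hence $y^{+}$ and $y^{-}$ are majorization-incomparable and the ``for every $y$'' claim of Lemma~\ref{lem:twolevel} fails. (The paper's proof of that lemma shows only that no element of $\mathcal{S}$ strictly majorizes $y^{+}$, i.e.\ maximality; for a partial order this is strictly weaker than being the maximum, which is what the Schur-convexity step needs.) So the step you single out as the main obstacle is precisely the step at which the paper's own argument is deficient; completing your program --- for instance by proving monotonicity of $\Psi_m$ in a continuous relaxation of $m$ --- would not only finish your proof but also supply the repair that the published one requires.
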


\begin{proof}
Write $x_i=\mu+y_i$ with $\sum_{i=0}^n y_i=0$ and $\sum_{i=0}^n y_i^2=(n+1)\sigma^2$.
Then, $\exp[x_0,\dots,x_n] = e^\mu\exp[y_0,\dots,y_n]$.
By the Hermite--Genocchi formula,
\begin{equation}
\label{eq:HG}
\exp[y_0,\dots,y_n]
=\int_{\Delta_n} \exp\!\Big(\sum_{i=0}^n \lambda_i y_i\Big)\, d\lambda,
\end{equation}
so the map $y:=(y_0,\dots,y_n)\mapsto \Phi(y):=n!\exp[y_0,\dots,y_n]$ 
is symmetric (permutation invariant) and jointly convex (an integral of convex maps).
Hence it is Schur--convex (see, e.g.,~\cite{roberts1974}, p.~258).
Let
\[
\mathcal{S}\;:=\;\Big\{y\in\mathbb{R}^{n+1}:\ \sum_{i=0}^{n} y_i=0,\ \sum_{i=0}^{n} y_i^2=(n+1)\sigma^2\Big\},
\,\,
y^{+}:=\left(a_0,-\frac{a_0}{n},\dots,-\frac{a_0}{n}\right),\,\, y^{-}:=-y^{+}.
\]

Because $\Phi$ is Schur--convex, Lemma~\ref{lem:twolevel} gives
$\Phi(y^{-})\le \Phi(y)\le \Phi(y^{+})$ for all $y\in\mathcal{S}$, with equality precisely at
the two-level vectors (up to permutation). Since
$\gamma(n,a) = a^n \int_0^1 s^{n-1} e^{-as} ds$,
$\exp[a_0,t] = e^{a_0}\int_0^1 e^{t-a_0} ds$, and
$(n-1)!\exp\big[a_0,t,\dots,t\big]={d^{\,n-1}} \exp[a_0,t] / {d t^{\,n-1}}$, where the node $t$ is repeated $n$ times, we obtain
\begin{eqnarray}
\Phi(y^{+}) = n! \exp\left[a_0,-\frac{a_0}{n},\dots,-\frac{a_0}{n}\right] &=& n e^{a_0} a^{-n} \gamma(n,a), \\
\Phi(y^{-}) = n! \exp\left[-a_0,\,\frac{a_0}{n},\,\dots\,,\,\frac{a_0}{n}\right] &=& n\,e^{-a_0}\,(-a)^{-n}\,\gamma(n,-a).
\end{eqnarray}
Multiplying by $e^\mu$ yields Eq.~(\ref{eq:sandwich}) and the equality cases.
\end{proof}

\begin{lemma}\label{lem:twolevel}
$y^{+}$ is the majorization-maximal vector in $\mathcal{S}$ and $y^{-}$ is the
majorization-minimal vector in $\mathcal{S}$. Equivalently, for every $y\in\mathcal{S}$ with
nonincreasing rearrangement $y_{[1]}\ge\cdots\ge y_{[n+1]}$ and $k=1,\dots,n$,
\[
\sum_{i=1}^k y_{[i]}\ \le\ \sum_{i=1}^k y^{+}_{[i]}
\quad\text{and}\quad
\sum_{i=1}^k y^{-}_{[i]}\ \le\ \sum_{i=1}^k y_{[i]},
\]
with equality for all $k$ iff $y$ is a permutation of $y^{+}$ or $y^{-}$, respectively.
\end{lemma}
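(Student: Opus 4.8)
The plan is to establish the two displayed partial-sum inequalities, which are by definition equivalent to the asserted majorization extremality. The first family says that, for each $k$, the vector $y^{+}$ maximizes the sum of the $k$ largest coordinates over $\mathcal S$; since $\sum_{i=1}^k y_{[i]}=\max_{|A|=k}\sum_{i\in A}y_i$, this is a smooth constrained optimization. My first step would be to compute $\max_{y\in\mathcal S}\sum_{i=1}^k y_{[i]}$ by Lagrange multipliers: stationarity of $\sum_{i\in A}y_i$ subject to $\sum_i y_i=0$ and $\sum_i y_i^2=(n+1)\sigma^2$ forces the optimizer to be two-level, with the $k$ selected coordinates equal to a common $u>0$ and the remaining $n+1-k$ equal to a common $v<0$ satisfying $ku+(n+1-k)v=0$.

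Imposing the variance constraint then gives $u=\sigma\sqrt{(n+1-k)/k}$ and the closed form
\[
\max_{y\in\mathcal S}\ \sum_{i=1}^k y_{[i]} \;=\; ku \;=\; \sigma\sqrt{k\,(n+1-k)} .
\]
The lemma would follow at once if this matched the corresponding partial sum of $y^{+}$, which a direct computation gives as $\sum_{i=1}^k y^{+}_{[i]}=a_0\,(n+1-k)/n=\sigma\,(n+1-k)/\sqrt n$.

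The main obstacle is exactly this comparison, and I expect it to be fatal rather than merely technical. The two expressions agree at $k=1$ (both equal $a_0=\sqrt n\,\sigma$), but for $2\le k\le n$ one has $\sqrt{k(n+1-k)}>(n+1-k)/\sqrt n$ --- equivalently $k>1$ --- so the constrained maximum strictly exceeds the partial sum of $y^{+}$. Concretely, for $n=3$ and $\sigma=1$ the balanced vector $(1,1,-1,-1)\in\mathcal S$ has top-$2$ sum equal to $2$, whereas $\sum_{i=1}^2 y^{+}_{[i]}=2/\sqrt3<2$. Thus $y^{+}$ is \emph{not} majorization-maximal in $\mathcal S$: the first inequality fails already at $k=2$, and applying the order-reversing involution $y\mapsto-y$ (which preserves $\mathcal S$, swaps $y^{+}\leftrightarrow y^{-}$, and reverses majorization) shows the companion inequality for $y^{-}$ fails as well. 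In fact no majorization extremum exists in $\mathcal S$ for $n\ge2$, since the single-spike vector $y^{+}$ (optimal only for $k=1$) and the balanced two-level vectors (optimal for larger $k$) are mutually incomparable.

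Since the lemma cannot be proved as stated, I would not route \Cref{thm:sandwich} through it, but would instead extremize $\Phi$ directly. Writing $\Phi(y)=\mathbb E\,e^{\langle\Lambda,y\rangle}$ with $\Lambda$ uniform on $\Delta_n$, the constraints make the constant, linear, and quadratic moment contributions equal across all of $\mathcal S$, so the leading term that distinguishes configurations is cubic and proportional to $\sum_i y_i^3$; its constrained extrema over $\mathcal S$ are precisely $y^{+}$ (maximum) and $y^{-}$ (minimum), which correctly predicts the extremizers claimed in the theorem. To make this rigorous for all $\sigma$ I would first show that any extremizer of $\Phi$ on $\mathcal S$ is two-level, and then prove monotonicity of the finite family $j\mapsto\Phi_j$ arising from the two-level configuration with $j$ high coordinates, establishing $\Phi_1\ge\Phi_j\ge\Phi_{n}$. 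This monotonicity is the genuine content that replaces the (false) majorization lemma, and I expect it to be the crux of a correct proof.
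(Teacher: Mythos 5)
Your refutation is correct, and the fault lies with the paper, not with you: the displayed partial-sum inequalities in Lemma~\ref{lem:twolevel} are false for every $n\ge 2$. Your counterexample checks out: for $n=3$, $\sigma=1$, the vector $y=(1,1,-1,-1)$ lies in $\mathcal S$ and has top-two partial sum $2$, while $y^{+}=(\sqrt3,-1/\sqrt3,-1/\sqrt3,-1/\sqrt3)$ has top-two partial sum $2/\sqrt3\approx 1.155$; your general computation $\max_{y\in\mathcal S}\sum_{i=1}^{k}y_{[i]}=\sigma\sqrt{k(n+1-k)}>\sigma(n+1-k)/\sqrt n=\sum_{i=1}^{k}y^{+}_{[i]}$ for $2\le k\le n$ is also right. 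Comparing with the paper's own argument explains the discrepancy: the paper only proves that no $y\in\mathcal S$ \emph{strictly majorizes} $y^{+}$ (any such $y$ would need $y_{[1]}\ge a_0$, which by the equality case of Eq.~(\ref{eq:top-cap}) forces $y$ to be a permutation of $y^{+}$). That establishes that $y^{+}$ is a \emph{maximal} element of the majorization order on $\mathcal S$, but majorization is only a partial order, so maximality does not give the \emph{greatest-element} property asserted in the ``Equivalently'' clause; your $y$ is precisely an element incomparable with $y^{+}$ (smaller $S_1$, larger $S_2$). The paper silently conflates ``maximal'' with ``maximum,'' and the clause it actually needs downstream is the false one.

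The flaw propagates: the proof of Theorem~\ref{thm:sandwich} invokes Schur-convexity of $\Phi$ together with $y^{-}\prec y\prec y^{+}$ for all $y\in\mathcal S$, which is exactly the greatest/least-element claim your counterexample kills, so that proof is invalid as written. The sandwich inequality itself may well still be true --- for your configuration, $\Phi\big((1,1,-1,-1)\big)=3e^{-1}\approx 1.104$, which does lie between $L_3(1)\approx 1.092$ and $M_3(1)\approx 1.119$ --- and your proposed repair is the natural route: on the compact set $\mathcal S$ any extremizer of $\Phi$ satisfies the Lagrange condition $n!\,\exp[y_0,\dots,y_n,y_i]=\lambda+2\mu y_i$ for all $i$, and since $t\mapsto\exp[y_0,\dots,y_n,t]$ is strictly convex it can agree with an affine function at no more than two points, so extremizers are two-level; the extrema are then located among the $n$ two-level configurations $\Phi_j$. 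But the final step --- showing the maximum occurs at $j=1$ (one high coordinate) and the minimum at $j=n$ --- is the genuine mathematical content, and neither the paper's argument nor your sketch supplies it. In short: your negative verdict on the lemma stands, the paper's proof of both the lemma (as stated) and the theorem is broken, and your replacement strategy is sound in outline but incomplete at its acknowledged crux.
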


\begin{proof}
Write the coordinates of $y\in\mathcal{S}$ in nonincreasing order:
$y_{[1]}\ge y_{[2]}\ge\cdots\ge y_{[n+1]}$. Since $\sum_{i=2}^{n+1} y_{[i]}=-y_{[1]}$, the
Cauchy--Schwarz inequality yields
\[
\sum_{i=2}^{n+1} y_{[i]}^2\ \ge\ \frac{\big(\sum_{i=2}^{n+1} y_{[i]}\big)^2}{n}
\;=\;\frac{y_{[1]}^2}{n}.
\]
Hence
\[
(n+1)\sigma^2=\sum_{i=1}^{n+1} y_{[i]}^2
\ \ge\ y_{[1]}^2+\frac{y_{[1]}^2}{n}
\ =\ y_{[1]}^2\Big(1+\frac1n\Big),
\]
so
\begin{equation}\label{eq:top-cap}
y_{[1]}\ \le\ a_0=\sqrt{n}\,\sigma.
\end{equation}
Moreover, equality in Eq.~(\ref{eq:top-cap}) holds if and only if
$y_{[2]}=\cdots=y_{[n+1]}=-a_0/n$ (equality case in Cauchy--Schwarz), i.e., $y$ is a
permutation of $y^{+}$.

Let $S_k(y):=\sum_{i=1}^k y_{[i]}$ be the $k$-th partial sum. Suppose there existed
$y\in\mathcal{S}$ with $S_k(y)\ge S_k(y^{+})$ for all $k$ and strict inequality for some $k$.
Then, in particular, $S_1(y)=y_{[1]}\ge S_1(y^{+})=a_0$, which by Eq.~(\ref{eq:top-cap}) forces
$y_{[1]}=a_0$ and hence (by the equality case) $y$ is a permutation of $y^{+}$; thus all
partial sums coincide and no strict improvement is possible. Therefore $y^{+}$ is
majorization-maximal in $\mathcal{S}$. The argument applied to $-y$ shows $y^{-}$ is
majorization-minimal.
\end{proof}
\begin{lemma}
\label{lem:LnMn-asymp}
\begin{eqnarray}
\label{eq:Ln}
L_n(\sigma) &=& 1 + \frac{\sigma^2}{2n} - \frac{\sigma^3}{3n^{3/2}} + O\left(\frac{1}{n^2}\right) \qquad \text{as}\ n\to\infty,\\
\label{eq:Mn}
M_n(\sigma) &=& 1 + \frac{\sigma^2}{2n} + \frac{\sigma^3}{3n^{3/2}} + O\left(\frac{1}{n^2}\right) \qquad \text{as}\ n\to\infty.
\end{eqnarray}
\end{lemma}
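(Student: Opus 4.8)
The plan is to reduce both $M_n$ and $L_n$ to a single Laplace-type integral and read off the expansion by Watson's lemma. Starting from the integral representation used in the proof of \Cref{thm:sandwich}, $\gamma(n,a)=a^n\int_0^1 s^{n-1}e^{-as}\,ds$, together with its counterpart $(-a)^{-n}\gamma(n,-a)=\int_0^1 s^{n-1}e^{as}\,ds$ (obtained from the substitution $t=-as$), I would first rewrite
\[
M_n(\sigma)=n\,e^{a_0}\int_0^1 s^{n-1}e^{-as}\,ds,\qquad
L_n(\sigma)=n\,e^{-a_0}\int_0^1 s^{n-1}e^{as}\,ds .
\]
Because $a_0=\sqrt n\,\sigma$ and $a=\tfrac{n+1}{\sqrt n}\sigma$ both reverse sign under $\sigma\mapsto-\sigma$, these formulas satisfy $L_n(\sigma)=M_n(-\sigma)$ identically. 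The change of variables and expansion below are valid for every fixed real $\sigma$, so it suffices to expand $M_n$; replacing $\sigma$ by $-\sigma$ then flips exactly the odd $n^{-3/2}$ term and yields \Cref{eq:Ln} from \Cref{eq:Mn}.

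The key step is a substitution concentrating the integrand near $s=1$. I would set $s=1-u/n$ and use the exact identities $a_0-a=-\sigma/\sqrt n$ and $a/n=\sigma/\sqrt n+\sigma/n^{3/2}$ to get
\[
M_n(\sigma)=e^{-\sigma/\sqrt n}\int_0^n\Big(1-\tfrac{u}{n}\Big)^{n-1}e^{au/n}\,du .
\]
Expanding the logarithm of the integrand gives $(n-1)\log(1-u/n)=-u+\tfrac{u}{n}-\tfrac{u^2}{2n}+O(n^{-2})$, and combining with $au/n=\tfrac{\sigma u}{\sqrt n}+\tfrac{\sigma u}{n^{3/2}}$ shows the integrand equals $e^{-u}e^{E}$ with $E=\tfrac{\sigma u}{\sqrt n}+\tfrac{u-u^2/2}{n}+\tfrac{\sigma u}{n^{3/2}}+O(n^{-2})$. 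Expanding $e^{E}=1+E+\tfrac{E^2}{2}+\tfrac{E^3}{6}+\cdots$, collecting powers of $n^{-1/2}$ through order $n^{-3/2}$, and integrating term by term against $e^{-u}$ via $\int_0^\infty u^k e^{-u}\,du=k!$ produces
\[
\int_0^\infty e^{-u}e^{E}\,du=1+\frac{\sigma}{\sqrt n}+\frac{\sigma^2}{n}+\frac{\sigma^3}{n^{3/2}}+O(n^{-2}).
\]
Multiplying by $e^{-\sigma/\sqrt n}=1-\tfrac{\sigma}{\sqrt n}+\tfrac{\sigma^2}{2n}-\tfrac{\sigma^3}{6n^{3/2}}+O(n^{-2})$ cancels the $n^{-1/2}$ term ($\sigma-\sigma=0$) and leaves $1+\tfrac{\sigma^2}{2n}+\tfrac{\sigma^3}{3n^{3/2}}+O(n^{-2})$, which is precisely \Cref{eq:Mn}.

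The main obstacle is the rigorous justification of the term-by-term integration, namely bounding the Taylor remainder uniformly and replacing $\int_0^n$ by $\int_0^\infty$. Here I would use that the integrand $(1-u/n)^{n-1}e^{au/n}$ is strictly decreasing on $[0,n)$ for large $n$ (its log-derivative $\tfrac{-(n-1)}{n-u}+\tfrac{a}{n}$ is negative there since $a\ll n$) and is dominated by $e^{-u(1-(1+a)/n)}\le e^{-u/2}$, so that the tail $\int_T^n$ with $T=n^{3/5}$ is exponentially small; on $[0,T]$ the quantity $E$ tends to $0$ uniformly, so the truncation error of $e^{E}$ integrates to $O(n^{-2})$. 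I expect the only delicate point to be the careful bookkeeping of these remainder estimates — in particular verifying that no spurious $n^{-3/2}$ contribution is gained or lost when truncating the expansion or extending the domain — since the algebraic expansion itself is routine once the two scales $a_0\sim\sqrt n$ and $a/n\sim n^{-1/2}$ are tracked consistently.
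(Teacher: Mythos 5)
Your proof is correct in its main content but follows a genuinely different route from the paper's. The paper works entirely with series: it expands $\gamma(n,z)=\sum_{m\ge0}\frac{(-1)^m}{(n+m)\,m!}z^{n+m}$, writes $M_n(\sigma)=e^{a_0}\sum_m(-1)^m\frac{n}{n+m}\frac{a^m}{m!}$ and $L_n(\sigma)=e^{-a_0}\sum_m\frac{n}{n+m}\frac{a^m}{m!}$, expands $\frac{n}{n+m}=1-\frac{m}{n}+\frac{m^2}{n^2}-\frac{m^3}{n^3}+O(m^4/n^4)$, resums the moment sums $\sum_m m^k(\pm a)^m/m!$ via the operator $T=a\,\frac{d}{da}$, and finally multiplies by $e^{\pm(a_0-a)}$. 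You instead run a Laplace-method/Watson's-lemma analysis on the integral representation $n\int_0^1 s^{n-1}e^{\mp as}\,ds$ after the substitution $s=1-u/n$ — note that your prefactor $e^{-\sigma/\sqrt n}=e^{a_0-a}$ plays exactly the same role as the paper's final factor. Your symmetry observation $L_n(\sigma)=M_n(-\sigma)$ is valid (both $a_0$ and $a$ are odd in $\sigma$, and the integral formulas make sense for every real $\sigma$), and it halves the work, whereas the paper carries out the two parallel computations. Your coefficient bookkeeping is also correct: collecting orders gives $\int_0^\infty e^{-u}e^{E}\,du=1+\frac{\sigma}{\sqrt n}+\frac{\sigma^2}{n}+\frac{\sigma^3}{n^{3/2}}+O(n^{-2})$, and multiplying by $e^{-\sigma/\sqrt n}$ leaves $1+\frac{\sigma^2}{2n}+\frac{\sigma^3}{3n^{3/2}}+O(n^{-2})$, i.e.\ Eq.~(\ref{eq:Mn}), with Eq.~(\ref{eq:Ln}) following by the sign flip. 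What each approach buys: the paper's route is purely algebraic and avoids integral remainder analysis; yours exposes the Laplace/Gaussian structure and extends mechanically to higher-order terms.

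One technical misstatement needs repair. With your cutoff $T=n^{3/5}$, the claim that $E\to0$ uniformly on $[0,T]$ is false: at $u=T$ the leading term is $\sigma u/\sqrt n=\sigma\,n^{1/10}\to\infty$. The fix is routine and does not disturb anything else: take $T=C\log n$ with $C\ge4$. The tail estimate still works, since your domination $(1-u/n)^{n-1}e^{au/n}\le e^{-u(1-(1+a)/n)}\le e^{-u/2}$ (valid for large $n$ because $a\sim\sigma\sqrt n\ll n$) gives $\int_T^n\le 2e^{-T/2}=O(n^{-C/2})=O(n^{-2})$. On $[0,T]$ one now genuinely has $E=O\!\left(\log n/\sqrt n\right)\to0$ uniformly, so $\bigl|e^{E}-\sum_{k\le3}E^k/k!\bigr|\le|E|^4e^{|E|}=O\!\left((1+u^8)/n^2\right)$, the error from truncating $E$ itself (the Taylor remainder of $(n-1)\log(1-u/n)$) is $O\!\left(u^3(1+u)/n^2\right)$, and both integrate against $e^{-u}$ to $O(n^{-2})$; finally, extending the integrals of the retained terms from $[0,T]$ back to $[0,\infty)$ costs only $O\!\left(e^{-T/2}\,\mathrm{poly}(T)\right)$, which is negligible. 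With this single adjustment your argument is a complete and rigorous alternative proof of the lemma.
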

\begin{proof}
For integer $n\ge1$,
\[
\gamma(n,z)=\sum_{m=0}^\infty \frac{(-1)^m}{(n+m)\,m!}\,z^{\,n+m},
\]
hence
\[
M_n(\sigma)=e^{a_0}\sum_{m=0}^\infty (-1)^m\frac{n}{n+m}\frac{a^m}{m!},\qquad
L_n(\sigma)=e^{-a_0}\sum_{m=0}^\infty \frac{n}{n+m}\frac{a^m}{m!}.
\]

Use the expansion
$n/(n+m)=1-m/n+m^2/n^2-m^3/n^3+O(m^4/n^4)$
and the identities
$\sum_{m=0}^\infty m^k (-a)^m/m!=T^k (e^{-a})$ and
$\sum_{m=0}^\infty m^k a^m/m!=T^k (e^{a})$
for $T:=a\cdot d/da$ and $k=0,1,2,3$.

It follows that
\begin{align*}
\sum_{m=0}^\infty (-1)^m\frac{n}{n+m}\frac{a^m}{m!}
&=e^{-a}\!\left[1+\frac{a}{n}+\frac{a^2-a}{n^2}+\frac{a^3-3a^2+a}{n^3}\right]
+O\!\left(\frac{1}{n^2}\right),\\
\sum_{m=0}^\infty \frac{n}{n+m}\frac{a^m}{m!}
&=e^{a}\!\left[1-\frac{a}{n}+\frac{a^2+a}{n^2}-\frac{a^3+3a^2+a}{n^3}\right]
+O\!\left(\frac{1}{n^2}\right),
\end{align*}
Since $a_0-a=-\sigma/\sqrt n$,
\[
e^{\pm(a_0-a)}=1\pm\frac{\sigma}{\sqrt n}+\frac{\sigma^2}{2n}\pm\frac{\sigma^3}{6n^{3/2}}
+O\!\left(\frac{1}{n^2}\right),
\]
and
\[
\frac{a}{n}=\frac{\sigma}{\sqrt n}+O\!\left(\frac{1}{n^{3/2}}\right),\quad
\frac{a^2-a}{n^2}=\frac{\sigma^2}{n}+O\!\left(\frac{1}{n^{3/2}}\right),\quad
\frac{a^3-3a^2+a}{n^3}=\frac{\sigma^3}{n^{3/2}}+O\!\left(\frac{1}{n^2}\right).
\]
Multiplying and keeping terms up to $n^{-3/2}$ gives Eqs.~(\ref{eq:Ln}) and (\ref{eq:Mn}).
\end{proof}

\begin{corollary}[Large-input limit]\label{thm:ddinfty}
Let $(x_i)_{i=0}^\infty$ be a sequence of real numbers and let
\[
\mu_n:=\frac1{n+1}\sum_{i=0}^n x_i,\qquad
\sigma_n^2:=\frac1{n+1}\sum_{i=0}^n (x_i-\mu_n)^2.
\]
Assume $\sup_{n\ge0}\sigma_n^2\le C<\infty$. Then
\[
n!\,\exp[x_0,\dots,x_n]
=\exp\!\left(\mu_n+\frac{\sigma_n^2}{2n}+O\!\big(n^{-3/2}\big)\right)\quad \text{as } n\to\infty.
\]
\end{corollary}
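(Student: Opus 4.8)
The plan is to read the claim directly off the sandwich bound of Theorem~\ref{thm:sandwich} together with the small-scale asymptotics of $L_n$ and $M_n$ supplied by Lemma~\ref{lem:LnMn-asymp}. Applying the sandwich with $\mu=\mu_n$ and $\sigma=\sigma_n$ and taking logarithms gives
\[
\mu_n+\log L_n(\sigma_n)\ \le\ \log\bigl(n!\,\exp[x_0,\dots,x_n]\bigr)\ \le\ \mu_n+\log M_n(\sigma_n),
\]
so it suffices to show that both the lower and upper logarithmic endpoints equal $\mu_n+\tfrac{\sigma_n^2}{2n}+O(n^{-3/2})$. The two-sided bound then pins $\log\bigl(n!\,\exp[x_0,\dots,x_n]\bigr)$ to that same value up to $O(n^{-3/2})$, and exponentiating yields the stated formula.

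For the endpoints I would substitute the expansions of Lemma~\ref{lem:LnMn-asymp},
\[
L_n(\sigma_n)=1+\frac{\sigma_n^2}{2n}-\frac{\sigma_n^3}{3n^{3/2}}+O(n^{-2}),\qquad
M_n(\sigma_n)=1+\frac{\sigma_n^2}{2n}+\frac{\sigma_n^3}{3n^{3/2}}+O(n^{-2}),
\]
and expand the logarithm via $\log(1+w)=w+O(w^2)$. The bracketed correction $w$ is $O(n^{-1})$, so $w^2=O(n^{-2})$ is absorbed into the error, leaving $\log L_n(\sigma_n)=\frac{\sigma_n^2}{2n}-\frac{\sigma_n^3}{3n^{3/2}}+O(n^{-2})$ and likewise for $M_n$ with the opposite cubic sign. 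Since $\sigma_n$ is bounded, the cubic terms are themselves $O(n^{-3/2})$, so both endpoints collapse to $\frac{\sigma_n^2}{2n}+O(n^{-3/2})$; crucially the leading variance term $\frac{\sigma_n^2}{2n}$ is identical for $L_n$ and $M_n$, so the sandwich closes at this order and no information beyond $O(n^{-3/2})$ is lost.

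The one point requiring care --- and the main obstacle --- is that Lemma~\ref{lem:LnMn-asymp} is stated for \emph{fixed} $\sigma$ as $n\to\infty$, whereas here $\sigma_n$ varies with $n$. I therefore need the implied constants in that lemma to be uniform over the bounded range $0\le\sigma_n\le\sqrt{C}$ guaranteed by the hypothesis $\sup_{n}\sigma_n^2\le C$. This is where I would invest the effort: inspecting the proof of Lemma~\ref{lem:LnMn-asymp}, each coefficient in the expansion is a fixed polynomial in $\sigma$ multiplied by an explicit power of $n^{-1/2}$, and the remainders arise from the tails of absolutely convergent series, so on a compact $\sigma$-interval the error is uniformly $O(n^{-2})$ and the expansions may be evaluated at $\sigma=\sigma_n$ with a single implied constant depending only on $C$. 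With this uniformity in hand the chain above goes through verbatim and the corollary follows.
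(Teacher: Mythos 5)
Your proposal is correct and is exactly the derivation the paper intends: the corollary is stated without explicit proof precisely because it follows by combining Theorem~\ref{thm:sandwich} with Lemma~\ref{lem:LnMn-asymp}, taking logarithms, and noting that the $\sigma_n^3/(3n^{3/2})$ terms are absorbed into the $O(n^{-3/2})$ error. Your attention to the uniformity of the lemma's error term over the compact range $0\le\sigma_n\le\sqrt{C}$ is a point the paper glosses over, and your justification of it (the remainders in the lemma's proof are controlled by fixed polynomials in $\sigma$ times explicit powers of $n^{-1}$) is sound.
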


\section{Useful identities}
\label{sec:identities}

\noindent
In this section, we adopt the shorthand notation $e^{t [x_0, \ldots, x_q]} := f[x_0, \ldots, x_q]$ for $f(x) = e^{t x}$
and $x_0,\dots,x_q\in\mathbb C$.
We begin by briefly recalling several well-known results that will be used below.

The divided difference of any holomorphic function $f(x)$ can be defined over the multiset $[x_0, \ldots, x_q]$ 
using a contour integral~\cite{mccurdy1984, deboor2005},
\begin{equation}
    \label{eq:contour-int-dd}
    f[x_0, \ldots, x_q] \equiv \frac{1}{2\pi i} \oint\limits_{\Gamma} \frac{f(x)}{\prod_{i=0}^q(x - x_i)} \,dx,
\end{equation}
for $\Gamma$ a positively oriented contour enclosing all the $x_i$'s.
The divided differences can be shown to satisfy the Leibniz rule~\cite{deboor2005},
\begin{equation}
    \label{eq:leibniz-rule}
    (f \cdot g)[x_0, \ldots, x_q] = \sum_{j=0}^q f[x_0, \ldots, x_j] g[x_j, \ldots, x_q] = \sum_{j=0}^q g[x_0, \ldots, x_j] f[x_j, \ldots, x_q].
\end{equation}
Replacing the variable $x \rightarrow \alpha x$ in~\Cref{eq:contour-int-dd}, we find the rescaling relation,
\begin{equation}
    \label{eq:rescaling-relation}
    \alpha^q e^{t [\alpha x_0, \ldots, \alpha x_q]} = e^{\alpha t [x_0, \ldots, x_q]}.
\end{equation}
Combining~\Cref{eq:rescaling-relation} with Eq. (11) of Ref.~\cite{kunz1965inverse} with $P_m(x) = 1$, we find
\begin{equation}
    \label{eq:laplace-of-dd}
    \lap{\,e^{\alpha t [x_0, \ldots, x_q]}\,}
    \;=\;
    \frac{\alpha^q}{\prod_{j=0}^q \bigl(s-\alpha x_j\bigr)},
\end{equation}
where $\lap{\cdot}$ denote the Laplace transform in $t \rightarrow s$,
and Eq.~(\ref{eq:laplace-of-dd}) holds in the right half-plane $\Re s \;>\; \max_{0\le j\le q} \Re(\alpha x_j)$,
where the Laplace transform converges.
One can alternatively derive~\Cref{eq:laplace-of-dd} by directly performing the integration to the contour 
integral definition in~\Cref{eq:contour-int-dd}, e.g. by Taylor expanding $e^{\alpha t x}$ and re-summing term-by-term, 
legitimate by the uniform convergence of the exponential.

\begin{theorem}[Convolution]
\label{app-lem:convolution}
Let $j\in\{0,\dots,q-1\}$ and $\beta\ge0$. Then
\begin{equation}
\label{app-eq:convolution-theorem}
\int_0^\beta e^{-\tau [x_{j+1}, \ldots, x_q]}  e^{-(\beta-\tau) [x_0, \ldots, x_j]} \dtau = - e^{-\beta [x_0, \ldots, x_q]}.
\end{equation}
\end{theorem}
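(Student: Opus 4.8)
The plan is to recognize the left-hand side as a Laplace convolution in the variable $\beta$ and to pass to the Laplace domain, where the convolution collapses into a product of two rational functions that we can read off directly from Eq.~(\ref{eq:laplace-of-dd}).

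First I would set $A(\tau):=e^{-\tau [x_{j+1},\ldots,x_q]}$ and $B(\tau):=e^{-\tau [x_0,\ldots,x_j]}$, so that the integrand is $A(\tau)\,B(\beta-\tau)$ and the left-hand side is exactly the convolution $(A*B)(\beta)=\int_0^\beta A(\tau)\,B(\beta-\tau)\dtau$. Taking the Laplace transform in $\beta\mapsto s$ and invoking the convolution theorem turns this into the product $\lap{A}(s)\,\lap{B}(s)$.

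Next I would evaluate the two factors using Eq.~(\ref{eq:laplace-of-dd}) with $\alpha=-1$, taking care with the node counts. The node set $\{x_{j+1},\ldots,x_q\}$ has $q-j$ elements, giving $\lap{A}(s)=(-1)^{q-j-1}\big/\prod_{i=j+1}^{q}(s+x_i)$, while $\{x_0,\ldots,x_j\}$ has $j+1$ elements, giving $\lap{B}(s)=(-1)^{j}\big/\prod_{i=0}^{j}(s+x_i)$. Multiplying yields
\[
\lap{A}(s)\,\lap{B}(s)=\frac{(-1)^{q-1}}{\prod_{i=0}^{q}(s+x_i)}.
\]
Applying Eq.~(\ref{eq:laplace-of-dd}) once more to all $q+1$ nodes (again $\alpha=-1$) gives $\lap{-e^{-\beta [x_0,\ldots,x_q]}}(s)=-(-1)^{q}\big/\prod_{i=0}^{q}(s+x_i)=(-1)^{q-1}\big/\prod_{i=0}^{q}(s+x_i)$, the identical function; note that this is precisely where the minus sign on the right-hand side is produced. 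Since both sides are continuous in $\beta\ge0$ and the Laplace transform is injective, equality of the transforms gives the claimed identity.

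The one point requiring care is the domain of convergence: Eq.~(\ref{eq:laplace-of-dd}) with $\alpha=-1$ holds only for $\Re s>-\min_i\Re x_i$, so I would first check that the half-planes associated with $\lap{A}$ and $\lap{B}$ share a common overlap, namely $\Re s>-\min_{0\le i\le q}\Re x_i$, on which the convolution theorem and both factorizations hold simultaneously; uniqueness of the inverse transform on that half-plane then delivers the identity for every $\beta\ge0$. This convergence bookkeeping, together with the sign/node-count accounting above, is the only real obstacle, and a sanity check at $q=1,\ j=0$ confirms the setup: $\int_0^\beta e^{-\tau x_1}e^{-(\beta-\tau)x_0}\dtau=(e^{-\beta x_0}-e^{-\beta x_1})/(x_1-x_0)=-e^{-\beta [x_0,x_1]}$. (One could alternatively insert the contour representation~(\ref{eq:contour-int-dd}) for each factor and perform the elementary $\tau$-integral directly, but the double-contour residue bookkeeping is messier than the Laplace route.)
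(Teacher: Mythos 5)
Your proof is correct and follows essentially the same route as the paper's own: both recognize the left-hand side as a Laplace convolution, factor it via Eq.~(\ref{eq:laplace-of-dd}) with $\alpha=-1$, track the signs $(-1)^{q-j-1}\cdot(-1)^{j}=(-1)^{q-1}$, and conclude by uniqueness of the Laplace transform. Your added attention to the common half-plane of convergence is a nice touch the paper leaves implicit, but it is not a departure in method.
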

\begin{proof}
Define 
\[
f(t):=e^{-t [x_{j+1}, \ldots, x_q]},\qquad 
g(t):=e^{-t [x_0, \ldots, x_j]}.
\]
Their convolution is
\[
(f * g)(t)=\int_0^{t} f(\tau)\, g(t-\tau)\, \dtau,
\]
so Eq.~(\ref{app-eq:convolution-theorem}) states that $(f*g)(\beta)=-e^{-\beta [x_0,\ldots,x_q]}$.
By the convolution property of the Laplace transform and~\Cref{eq:laplace-of-dd}, we find
\begin{multline*}
    \lap{(f * g)(t)} = \lap{f(t)} \lap{g(t)} \\
    = \left( \frac{ (-1)^{q-j-1} }{ \prod_{l=j+1}^q (s + x_l) } \right) \left( \frac{ (-1)^{j} }{ \prod_{m=0}^j (s + x_m) }\right) = \frac{ (-1)^{q-1} }{ \prod_{l=0}^q (s + x_l) } = \lap{ - e^{-t [x_0, \ldots, x_q]} }.
\end{multline*}
By the uniqueness theorem for the Laplace transform, $(f*g)(t)=-e^{-t [x_0,\ldots,x_q]}$ for all $t\ge 0$.
Evaluating at $t=\beta$ gives Eq.~(\ref{app-eq:convolution-theorem}).
\end{proof} 

\begin{theorem}[Repeated argument sum]
    \label{prop:repatedarg-sum-simp}
    \begin{equation}
        \sum_{j=0}^q e^{-\tau [x_0, \ldots, x_q, x_j]} = -\tau e^{-\tau [x_0, \ldots, x_q]}
    \end{equation}
\end{theorem}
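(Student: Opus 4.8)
The plan is to prove the identity by matching Laplace transforms in the variable $\tau\to s$, exactly as in the proof of the Convolution theorem. Each summand on the left is a divided difference of $e^{-\tau x}$ over the multiset $[x_0,\dots,x_q,x_j]$, i.e.\ over $q+2$ nodes in which $x_j$ is repeated. Setting $\alpha=-1$ in~\Cref{eq:laplace-of-dd} and counting $q+2$ nodes (so the prefactor becomes $\alpha^{q+1}=(-1)^{q+1}$ and the denominator carries the extra factor $(s+x_j)$), I would write
\[
\lap{\,e^{-\tau[x_0,\dots,x_q,x_j]}\,}=\frac{(-1)^{q+1}}{(s+x_j)\prod_{l=0}^{q}(s+x_l)} .
\]
Summing over $j$ and pulling out the common denominator gives $\dfrac{(-1)^{q+1}}{\prod_{l=0}^{q}(s+x_l)}\sum_{j=0}^{q}\dfrac{1}{s+x_j}$.

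For the right-hand side I would apply the multiplication-by-$\tau$ rule $\lap{\tau\,g(\tau)}=-\frac{d}{ds}\lap{g(\tau)}$ together with~\Cref{eq:laplace-of-dd} at $\alpha=-1$, so that with $P(s):=\prod_{l=0}^{q}(s+x_l)$,
\[
\lap{-\tau\,e^{-\tau[x_0,\dots,x_q]}}=\frac{d}{ds}\,\frac{(-1)^q}{P(s)}=-(-1)^q\,\frac{P'(s)}{P(s)^2}.
\]
The key observation is that $P'(s)/P(s)=\sum_{j=0}^{q}1/(s+x_j)$ is the logarithmic derivative of $P$, so the right-hand transform equals $\dfrac{(-1)^{q+1}}{P(s)}\sum_{j=0}^{q}\dfrac{1}{s+x_j}$, which is identical to the left-hand transform computed above.

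Since both sides are analytic and agree on the common right half-plane $\Re s>\max_{0\le j\le q}\Re(-x_j)$ where all the transforms converge, the uniqueness theorem for the Laplace transform (already invoked in the Convolution theorem) yields equality of the original functions for every $\tau\ge0$. The only place demanding care is the bookkeeping of the repeated node: one must correctly track that doubling $x_j$ raises the node count to $q+2$, which simultaneously fixes the sign $(-1)^{q+1}$ and the single extra denominator factor $(s+x_j)$; once that is in place, recognizing $\sum_j 1/(s+x_j)$ as $P'/P$ makes the two transforms coincide exactly, and nothing else is more than routine. As an alternative route, I would note that the same identity follows without transforms by combining the confluent-node differentiation rule $\partial_{x_j}e^{-\tau[x_0,\dots,x_q]}=e^{-\tau[x_0,\dots,x_q,x_j]}$ (the repeated-argument identity used in the supermodularity proof) with the translation relation $e^{-\tau[x_0+c,\dots,x_q+c]}=e^{-\tau c}\,e^{-\tau[x_0,\dots,x_q]}$, differentiating the latter in $c$ at $c=0$ so that $\sum_{j=0}^{q}\partial_{x_j}$ acting on $e^{-\tau[x_0,\dots,x_q]}$ produces the factor $-\tau$.
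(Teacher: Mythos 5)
Your proposal is correct and follows essentially the same route as the paper's own proof: both sides are matched in Laplace-transform space using Eq.~(\ref{eq:laplace-of-dd}) together with the multiplication-by-$\tau$ (differentiation-in-$s$) rule, the derivative of $(-1)^q/\prod_{l=0}^q(s+x_l)$ is expanded as $\sum_{j=0}^q (-1)^{q+1}\big/\big((s+x_j)\prod_{l=0}^q(s+x_l)\big)$, and the conclusion follows by uniqueness of the Laplace transform. Your logarithmic-derivative phrasing of $P'/P$ is exactly the paper's one-line computation, just written in a slightly different order.
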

\begin{proof}
    Denote $f(\tau) \equiv e^{-\tau [x_0, \ldots, x_q]}$ and recall  $\lap{e^{-\tau [x_0, \ldots, x_q]}} = (-1)^q / (\prod_{i=0}^q (s + x_i)).$ By the differentiation property of the Laplace transform and Eq.~(\ref{eq:laplace-of-dd}),
        \begin{equation}
            \lap{-\tau f(\tau)} = \pdv{s}{\lap{f(\tau)}} = \sum_{j=0}^q \frac{(-1)^{q+1}}{(s + x_j) \prod_{i=0}^q (s + x_i)} = \lap{\sum_{j=0}^q e^{-\tau [x_0, \ldots, x_q, x_j]}},
        \end{equation}
    and the result follows by the uniqueness of the Laplace transform.
\end{proof}

\begin{theorem}[Weighted, repeated argument sum]
\label{lemm:weighted-repeatedarg-sum}
    \beq
        \label{eq:weighted-repeatedarg-sum}
        \sum_{j=0}^q x_j e^{-\tau [x_0,\dots,x_q,x_j]} = \left(\tau\frac{\partial}{\partial\tau}-q\right)e^{-\tau [x_0,\dots,x_q]}.
    \eeq
\end{theorem}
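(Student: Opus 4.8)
The plan is to follow exactly the Laplace-transform strategy used in \Cref{app-lem:convolution} and \Cref{prop:repatedarg-sum-simp}: transform both sides in $\tau\to s$, verify that the two transforms coincide as rational functions of $s$, and conclude by uniqueness of the Laplace transform. Write $f(\tau):=e^{-\tau [x_0,\dots,x_q]}$ and $\hat f(s):=\lap{f(\tau)}$, so that by \Cref{eq:laplace-of-dd} (with $\alpha=-1$) we have $\hat f(s)=(-1)^q/P(s)$, where $P(s):=\prod_{i=0}^q(s+x_i)$.

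For the right-hand side I would use the two standard operational rules $\lap{\tau g(\tau)}=-\tfrac{d}{ds}\lap{g(\tau)}$ and $\lap{g'(\tau)}=s\,\lap{g(\tau)}-g(0)$. Combining them gives $\lap{\tau f'(\tau)}=-\hat f(s)-s\,\hat f'(s)$, since the constant $f(0)$ drops under $d/ds$, and hence
\[
\lap{\Big(\tau\pdv{\tau}-q\Big)f(\tau)}=-(q+1)\,\hat f(s)-s\,\hat f'(s).
\]

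For the left-hand side I would transform term by term. Each summand carries the node list $[x_0,\dots,x_q,x_j]$ of length $q+2$, so \Cref{eq:laplace-of-dd} yields $\lap{e^{-\tau [x_0,\dots,x_q,x_j]}}=(-1)^{q+1}/\big(P(s)\,(s+x_j)\big)$, and therefore
\[
\lap{\sum_{j=0}^q x_j\,e^{-\tau [x_0,\dots,x_q,x_j]}}=\frac{(-1)^{q+1}}{P(s)}\sum_{j=0}^q\frac{x_j}{s+x_j}.
\]
The remaining task is purely algebraic: write $\tfrac{x_j}{s+x_j}=1-\tfrac{s}{s+x_j}$ to get $\sum_j\tfrac{x_j}{s+x_j}=(q+1)-s\sum_j\tfrac{1}{s+x_j}$, and identify the logarithmic derivative $\sum_j\tfrac{1}{s+x_j}=P'(s)/P(s)=-\hat f'(s)/\hat f(s)$. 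Substituting this, together with $(-1)^{q+1}/P(s)=-\hat f(s)$, collapses the left-hand transform to $-(q+1)\hat f(s)-s\,\hat f'(s)$, which matches the right-hand transform computed above. Uniqueness of the Laplace transform then gives the identity for all $\tau\ge0$.

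I expect the only delicate point to be this last reconciliation: keeping the signs $(-1)^q$ versus $(-1)^{q+1}$ straight and recognizing the partial-fraction sum $\sum_j 1/(s+x_j)$ as the logarithmic derivative $-\hat f'/\hat f$ of the base transform; once that identification is in place, the two transforms visibly agree. Convergence poses no difficulty, as every manipulation above takes place in the common half-plane $\Re s>\max_{0\le j\le q}(-\Re x_j)$ where all the transforms converge.
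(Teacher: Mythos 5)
Your proof is correct, but it follows a genuinely different route from the paper's. The paper proves the identity by a direct computation in the $\tau$-domain (presented as an induction on $q$, though the inductive hypothesis is never actually invoked in the step): using the rescaling relation, Eq.~(\ref{eq:rescaling-relation}), the right-hand side is rewritten as $\left(\tau\pdv{\tau}-q\right)e^{-\tau[x_0,\ldots,x_q]}=-(-\tau)^{q+1}\pdv{\tau}\,e^{[-\tau x_0,\ldots,-\tau x_q]}$, and then the contour representation, Eq.~(\ref{eq:contour-int-dd}), is differentiated under the integral sign; the product rule applied to $1/\prod_{i}(z+\tau x_i)$ produces one term per node $x_j$, and rescaling back yields exactly $\sum_j x_j e^{-\tau[x_0,\ldots,x_q,x_j]}$. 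Your argument instead extends the Laplace-transform technique that the paper itself uses for Theorem~\ref{app-lem:convolution} and Theorem~\ref{prop:repatedarg-sum-simp}, and every step checks out: writing $\hat f(s)=(-1)^q/P(s)$ with $P(s)=\prod_{i=0}^q(s+x_i)$, the rule $\lap{\tau f'(\tau)}=-\hat f(s)-s\hat f'(s)$, the term-by-term transform of the left-hand side, and the partial-fraction identity $\sum_{j=0}^q x_j/(s+x_j)=(q+1)-s\,P'(s)/P(s)$ together show that both sides transform to $-(q+1)\hat f(s)-s\hat f'(s)$, after which uniqueness of the Laplace transform (both sides being continuous and of exponential order) finishes the proof. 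What your route buys is uniformity with the rest of Section~\ref{sec:identities}: everything reduces to rational-function algebra in $s$ with transparent sign bookkeeping, and the only operational rule you need beyond what the paper already invokes is $\lap{g'(\tau)}=s\lap{g(\tau)}-g(0)$, which is harmless here. What the paper's route buys is that it stays entirely in the $\tau$-domain and makes visible where each summand comes from (differentiation of one factor of the denominator), without any appeal to transform uniqueness.
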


\begin{proof}
We prove this by induction.

\emph{Base case.}
When $q=0$, we have 
$x_0 e^{-\tau [x_0,x_0]} = x_0 \partial e^{-\tau x_0} / \partial x_0 = -\tau x_0 e^{-\tau x_0}$, so Eq.~(\ref{eq:weighted-repeatedarg-sum}) holds.

\emph{Induction step.} By Eq.~(\ref{eq:rescaling-relation}), we write the right-hand side as,
\begin{multline}
    (\tau \partial_\tau - (q + 1)) (-\tau)^{q+1} e^{[-\tau x_0, \ldots, -\tau x_{q+1}]} 
    = -(-\tau)^{q+2} \pdv{\tau} e^{[-\tau x_0, \ldots, -\tau x_{q+1}]} 
 \\
    = -(-\tau)^{q+2} \frac{1}{2\pi i} \oint_{\Gamma}  \pdv{\tau}  \frac{e^z}{\prod_{i=0}^{q+1} (z + \tau x_i)} \dz = \sum_{j=0}^{q+1} x_j e^{-\tau [x_0, \ldots, x_{q+1}, x_j]}
\end{multline}

Therefore, Eq.~(\ref{eq:weighted-repeatedarg-sum}) is true for all $q$.  
\end{proof}

\begin{lemma}[Parametric derivative]
    \begin{equation}
    \label{eq:parametric}
        \pdv{\tau} e^{-\tau [x_0, \ldots, x_q]} = - x_0 e^{-\tau [x_0, \ldots, x_q]} - e^{-\tau [x_1, \ldots, x_q]}
\qquad\text{for}\quad q>0.
    \end{equation}
\end{lemma}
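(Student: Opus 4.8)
The plan is to work directly from the contour-integral representation \Cref{eq:contour-int-dd}. Taking $f(x)=e^{-\tau x}$, I would start from
\[
e^{-\tau [x_0,\dots,x_q]}=\frac{1}{2\pi i}\oint_{\Gamma}\frac{e^{-\tau x}}{\prod_{i=0}^q(x-x_i)}\,dx,
\]
where $\Gamma$ is a fixed positively oriented contour enclosing all the nodes $x_0,\dots,x_q$. Because the integrand is entire in $\tau$ and $\Gamma$ is compact, differentiation under the integral sign is justified, and it simply inserts a factor $-x$ in the numerator.

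The heart of the argument is the elementary numerator splitting $-x=-x_0-(x-x_0)$. The term $-(x-x_0)$ cancels the factor $(x-x_0)$ in the denominator, collapsing $\prod_{i=0}^q(x-x_i)$ to $\prod_{i=1}^q(x-x_i)$; that is, it deletes the node $x_0$. This yields
\begin{multline*}
\pdv{\tau}\, e^{-\tau [x_0,\dots,x_q]}
= -x_0\,\frac{1}{2\pi i}\oint_{\Gamma}\frac{e^{-\tau x}}{\prod_{i=0}^q(x-x_i)}\,dx \\
-\frac{1}{2\pi i}\oint_{\Gamma}\frac{e^{-\tau x}}{\prod_{i=1}^q(x-x_i)}\,dx.
\end{multline*}
Since $\Gamma$ still encloses $x_1,\dots,x_q$, the first integral is exactly $e^{-\tau [x_0,\dots,x_q]}$ and the second is $e^{-\tau [x_1,\dots,x_q]}$, which is \Cref{eq:parametric}. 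The hypothesis $q>0$ enters precisely here: it guarantees that the reduced node multiset $\{x_1,\dots,x_q\}$ is nonempty, so the second divided difference is well defined.

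As an alternative that leans on machinery already developed, one can argue on the Laplace side. By \Cref{eq:laplace-of-dd} with $\alpha=-1$ one has $\lap{e^{-\tau [x_0,\dots,x_q]}}=(-1)^q\big/\prod_{i=0}^q(s+x_i)$, and the initial value $e^{0\cdot[x_0,\dots,x_q]}$ is the divided difference of the constant function $1$ over $q+1\ge 2$ nodes, hence $0$. Consequently $\lap{\partial_\tau e^{-\tau [x_0,\dots,x_q]}}=s\,(-1)^q\big/\prod_{i=0}^q(s+x_i)$, and a one-line partial-fraction identity using $s=(s+x_0)-x_0$ shows this coincides with the transform of the proposed right-hand side; uniqueness of the Laplace transform then closes the argument for all $\tau\ge0$.

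I do not anticipate a substantive obstacle: the statement is essentially the algebraic numerator splitting transported through a linear integral operator. The only point that merits explicit care is the justification for differentiating under the contour integral, which is routine given that the integrand is holomorphic in $\tau$ with a $\tau$-derivative bounded uniformly on the compact set $\Gamma$.
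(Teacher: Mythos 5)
Your proposal is correct and takes essentially the same route as the paper: both proofs differentiate the contour-integral representation Eq.~(\ref{eq:contour-int-dd}) under the integral sign, turning $\pdv{\tau}$ into insertion of the factor $-x$ in the numerator. The only cosmetic difference is the final step: the paper invokes the Leibniz rule Eq.~(\ref{eq:leibniz-rule}) for $g(x)=-x\,e^{-\tau x}$, where the linear factor $-x$ has divided differences $-x_0$, $-1$, and zero beyond first order, while you carry out the identical algebra by hand via the split $-x=-x_0-(x-x_0)$; your alternative Laplace-transform argument is also sound but not needed.
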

\begin{proof}
    First, define $g(x) = -x e^{-\tau x}$ for convenience. By Eq.~\eqref{eq:contour-int-dd}, 
    \begin{align}
        \pdv{\tau} e^{-\tau [x_0, \ldots, x_q]} \equiv \pdv{\tau} \frac{1}{2\pi i} \oint_{\Gamma} \frac{e^{-\tau z}}{\prod_{i=0}^q (z - x_i)} \dz =  \frac{1}{2\pi i} \oint_{\Gamma} \frac{-z e^{-\tau z}}{\prod_{i=0}^q (z - x_i)} \dz \equiv g[x_0, \ldots, x_q].
    \end{align}
    Employing the Leibniz rule, \Cref{eq:leibniz-rule}, we get the desired result. 
\end{proof}

\begin{corollary}
\beq
    \label{eq:computing-weighted-repeatedarg-sum}
    \sum_{j=0}^q x_j e^{-\tau [x_0,\dots,x_q,x_j]} = 
    	\left\{\begin{array}{ll}
            (-x_0\tau - q)e^{-\tau[x_0,\dots,x_q]}-\tau e^{-\tau[x_1,\dots,x_q]}, & \text{\rm for}\quad q > 0,\\
            -\tau x_0 e^{-\tau x_0}, & \text{\rm for}\quad q = 0.
            \end{array}\right.
\eeq
\end{corollary}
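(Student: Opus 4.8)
The plan is to take the compact closed form established in Theorem~\ref{lemm:weighted-repeatedarg-sum} and make the operator on its right-hand side fully explicit by inserting the parametric derivative identity, Eq.~\eqref{eq:parametric}; since that identity requires $q>0$, the single-node case $q=0$ is handled separately by direct evaluation.

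For $q>0$, I would begin from the weighted-sum identity
\[
\sum_{j=0}^q x_j e^{-\tau [x_0,\dots,x_q,x_j]} = \left(\tau\frac{\partial}{\partial\tau}-q\right)e^{-\tau [x_0,\dots,x_q]},
\]
and substitute Eq.~\eqref{eq:parametric}, which evaluates $\partial_\tau e^{-\tau[x_0,\dots,x_q]}$ as $-x_0 e^{-\tau[x_0,\dots,x_q]} - e^{-\tau[x_1,\dots,x_q]}$. Multiplying this by $\tau$ and then subtracting $q\,e^{-\tau[x_0,\dots,x_q]}$, the two contributions proportional to $e^{-\tau[x_0,\dots,x_q]}$ merge into the coefficient $(-x_0\tau-q)$, leaving the extra term $-\tau\,e^{-\tau[x_1,\dots,x_q]}$. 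This is exactly the $q>0$ branch of Eq.~\eqref{eq:computing-weighted-repeatedarg-sum}.

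For $q=0$, the sum collapses to the single term $x_0 e^{-\tau[x_0,x_0]}$. Using that a repeated-argument divided difference is a derivative, I would write $e^{-\tau[x_0,x_0]} = \partial_{x_0} e^{-\tau x_0} = -\tau e^{-\tau x_0}$, so the sum equals $-\tau x_0 e^{-\tau x_0}$. This reproduces the base case already recorded in the proof of Theorem~\ref{lemm:weighted-repeatedarg-sum}.

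I anticipate no genuine obstacle here: the argument is a direct substitution of one established identity into another, and the only real care lies in correctly distributing $\tau\partial_\tau-q$ and tracking the signs together with the factor of $\tau$ attached to the $e^{-\tau[x_1,\dots,x_q]}$ term. The statement splits into two cases precisely because Eq.~\eqref{eq:parametric} presupposes $q>0$, so the single-node case must be evaluated by hand rather than through the parametric derivative.
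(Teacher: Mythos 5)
Your proposal is correct and follows exactly the paper's own route: the paper proves this corollary by "applying Eq.~(\ref{eq:parametric}) to Theorem~\ref{lemm:weighted-repeatedarg-sum}," which is precisely your substitution of the parametric derivative into the operator $\tau\,\partial_\tau - q$, with the $q=0$ case handled by the direct evaluation $x_0\,e^{-\tau[x_0,x_0]} = -\tau x_0 e^{-\tau x_0}$ already recorded in that theorem's base case. Your write-up simply makes explicit the algebra the paper leaves implicit; there is nothing to add or correct.
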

\begin{proof}
    This follows from applying Eq.~(\ref{eq:parametric}) to Theorem~\ref{lemm:weighted-repeatedarg-sum}. 
\end{proof}

\begin{theorem}[Weighted, repeated argument, double, less-than sum]
\beq
\sum_{\substack{i,j=0 \\ i \leq j}}^q x_i e^{-\tau [x_0,\dots,x_q,x_i,x_j]} = 
\left(-\frac{\tau^2}{2}\frac{\partial}{\partial\tau}-\sum_{i=0}^q i\cdot \frac{\partial}{\partial x_i}\right) e^{-\tau [x_0,\dots,x_q]}.
\label{eq:c2a}
\eeq
\end{theorem}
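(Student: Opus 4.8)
The plan is to follow the same Laplace-transform strategy used for \Cref{eq:laplace-of-dd} and Theorems~\ref{prop:repatedarg-sum-simp} and~\ref{lemm:weighted-repeatedarg-sum}: transform both sides in $\tau\to s$, reduce each to a common rational expression, and conclude by uniqueness of the Laplace transform. Throughout I write $F:=e^{-\tau [x_0,\dots,x_q]}$ with $\widehat F=(-1)^q/P(s)$, where $P(s):=\prod_{l=0}^q(s+x_l)$, and I set $u_l:=1/(s+x_l)$, $\Sigma_1:=\sum_{l=0}^q u_l$, $\Sigma_2:=\sum_{l=0}^q u_l^2$, and $W:=\sum_{l=0}^q l\,u_l$. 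By \Cref{eq:laplace-of-dd}, each summand on the left transforms as $\lap{x_i\,e^{-\tau [x_0,\dots,x_q,x_i,x_j]}}=x_i\,(-1)^q/\big(P(s)(s+x_i)(s+x_j)\big)$, the sign being $(-1)^{q+2}=(-1)^q$ because the multiset now carries $q+3$ nodes.

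First I would simplify the transformed left-hand side. Factoring out $(-1)^q/P(s)$ leaves $\sum_{i\le j} x_i u_i u_j$, and using $x_i u_i = 1 - s\,u_i$ splits this into $\sum_{i\le j} u_j - s\sum_{i\le j} u_i u_j$. The first piece collapses to $\sum_j (j+1)u_j = W+\Sigma_1$ (for each fixed $j$ there are $j+1$ admissible values of $i$), while the second is the standard $\tfrac12(\Sigma_1^2+\Sigma_2)$. Hence $\lap{\mathrm{LHS}} = \frac{(-1)^q}{P}\big(W+\Sigma_1-\tfrac{s}{2}\Sigma_1^2-\tfrac{s}{2}\Sigma_2\big)$.

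For the right-hand side I would treat the two operators separately. Since $\partial_{x_i}$ commutes with the $\tau\to s$ transform, $\lap{\partial_{x_i}F} = \partial_{x_i}\big((-1)^q/P\big) = -(-1)^q u_i/P$, so $\lap{-\sum_i i\,\partial_{x_i}F} = \frac{(-1)^q}{P}W$. For the remaining term, $\lap{\tau^2\partial_\tau F}=\frac{d^2}{ds^2}\big(s\widehat F\big)$; here the boundary value $F(0)$ entering $\lap{\partial_\tau F}=s\widehat F-F(0)$ is constant in $s$ and is annihilated by $d^2/ds^2$, so it may be ignored. Logarithmic differentiation of $P$ (using $P'/P=\Sigma_1$ and $\Sigma_1'=-\Sigma_2$) gives $\frac{d^2}{ds^2}(s/P)=\frac1P\big(s\Sigma_2+s\Sigma_1^2-2\Sigma_1\big)$. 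Multiplying by $-\tfrac12$ and adding the two contributions yields exactly $\frac{(-1)^q}{P}\big(W+\Sigma_1-\tfrac{s}{2}\Sigma_1^2-\tfrac{s}{2}\Sigma_2\big)$, matching $\lap{\mathrm{LHS}}$, and uniqueness of the Laplace transform then finishes the proof.

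The main obstacle is the careful bookkeeping of the $\tau^2\partial_\tau$ contribution: one must correctly evaluate $\frac{d^2}{ds^2}(s/P)$ through logarithmic derivatives and confirm that the boundary term in $\lap{\partial_\tau F}$ does not contaminate the result (it does not, being $s$-independent). The only other point requiring care is the index-weighted collapse $\sum_{i\le j}u_j=\sum_j(j+1)u_j=W+\Sigma_1$, since it is precisely this step that reproduces the $\sum_{i=0}^q i\,\partial_{x_i}$ operator on the right-hand side.
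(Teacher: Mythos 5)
Your proof is correct, and it takes a genuinely different route from the paper's. The paper proves Eq.~(\ref{eq:c2a}) by induction on $q$: the base case $q=0$ is checked directly, and the induction step uses Newton's recursion to split $e^{-\tau[x_0,\dots,x_{q+1},x_i,x_j]}$ into differences over the node lists with $x_0$ or $x_1$ removed, applies the induction hypothesis to each, and then recombines, at the cost of some delicate index bookkeeping. You instead extend the Laplace-transform technique that the paper itself uses for Theorem~\ref{app-lem:convolution} and Theorem~\ref{prop:repatedarg-sum-simp}: transform both sides in $\tau\to s$, reduce to an identity among rational functions of $s$, and invoke uniqueness. Your computations check out: the sign $(-1)^{q+2}=(-1)^q$ for the $(q+3)$-node differences; the splitting $x_iu_i=1-su_i$; the counts $\sum_{i\le j}u_j=W+\Sigma_1$ and $\sum_{i\le j}u_iu_j=\tfrac12(\Sigma_1^2+\Sigma_2)$; the formula $\lap{\partial_{x_i}F}=-(-1)^q u_i/P$; and $\frac{d^2}{ds^2}(s/P)=\big(s\Sigma_2+s\Sigma_1^2-2\Sigma_1\big)/P$, with the boundary term $F(0)$ from $\lap{\partial_\tau F}=s\widehat F-F(0)$ indeed annihilated by $d^2/ds^2$. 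What each approach buys: yours turns the identity into transparent symmetric-function algebra and explains structurally why the operator $-\tfrac{\tau^2}{2}\partial_\tau-\sum_i i\,\partial_{x_i}$ appears (its transform produces exactly $W+\Sigma_1-\tfrac{s}{2}(\Sigma_1^2+\Sigma_2)$), whereas the paper's induction stays entirely within divided-difference algebra and needs no analytic justification. The one step you assert without proof is that $\partial_{x_i}$ commutes with the Laplace integral; this is legitimate since $\partial_{x_i}F=e^{-\tau[x_0,\dots,x_q,x_i]}$ is locally uniformly dominated by an exponential in $\tau$, but in a final write-up it deserves a sentence, just as the paper's own transform-based proofs implicitly rely on such standard facts.
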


\begin{proof}
We prove this by induction.

Base case.
When $q=0$, we have 
$$
x_0 e^{-\tau [x_0,x_0,x_0]} = x_0 \cdot \frac{1}{2}\frac{\partial^2}{\partial x_0^2} e^{-\tau x_0} 
= \frac{\tau^2 x_0}{2} e^{-\tau x_0} = -\frac{\tau^2}{2}\frac{\partial}{\partial\tau} e^{-\tau x_0},
$$ so Eq.(\ref{eq:c2a}) holds.

Induction step. It follows from Newton's recursion for divided differences that
\begin{align}
\sum_{\substack{i,j=0 \\ i \leq j}}^{q+1} x_i e^{-\tau [x_0,\dots,x_{q+1},x_i,x_j]} &=
\frac{1}{x_0-x_1} \sum_{\substack{i,j=0 \\ i \leq j}}^{q+1} x_i \left( e^{-\tau [x_0,x_2,\dots,x_{q+1},x_i,x_j]} - e^{-\tau[x_1,x_2,\dots,x_{q+1},x_i,x_j]}\right) \notag \\ &= 
\frac{1}{x_0-x_1} \Bigg(
\Big(
\big(-\frac{\tau^2}{2}\frac{\partial}{\partial\tau}-\sum_{i=1}^q i\cdot \frac{\partial}{\partial x_{i+1}}\big)e^{-\tau [x_0,x_2,\dots,x_{q+1}]}  \notag \\
&\qquad + 
x_0 e^{-\tau [x_0,x_2,\dots,x_{q+1},x_0,x_1]}+ 
x_1 \sum_{j=1}^{q+1} e^{-\tau [x_0,\dots,x_{q+1},x_j]}
\Big) \notag \\
-\Big(
\big(&-\frac{\tau^2}{2}\frac{\partial}{\partial\tau}-\sum_{i=1}^q i\cdot \frac{\partial}{\partial x_{i+1}}\big)e^{-\tau [x_1,x_2,\dots,x_{q+1}]} + x_0 \sum_{j=0}^{q+1} e^{-\tau [x_0,x_1,\dots, x_{q+1},x_j]}
\Big)
\Bigg) 
\notag \\ &=
\left(-\frac{\tau^2}{2}\frac{\partial}{\partial\tau}-\sum_{i=0}^q i\cdot \frac{\partial}{\partial x_{i+1}}\right)e^{-\tau [x_0,\dots,x_{q+1}]}
-\sum_{j=1}^{q+1} e^{-\tau [x_0,\dots,x_{q+1},x_j]} \notag \\ &=
\left(-\frac{\tau^2}{2}\frac{\partial}{\partial\tau} -\sum_{i=0}^{q+1} i\cdot \frac{\partial}{\partial x_i}\right) e^{-\tau [x_0,\dots,x_{q+1}]}.
\end{align}

Therefore, Eq.~(\ref{eq:c2a}) is true for all $q$.  
\end{proof}

\begin{corollary}
\begin{multline}
\sum_{\substack{i,j=0 \\ i \leq j}}^q x_i e^{-\tau [x_0,\dots,x_q,x_i,x_j]} \\
	= \left\{\begin{array}{ll}
	\left(\tau^2 x_0/2\right) e^{-\tau [x_0,\dots,x_q]} + (\tau^2/2) e^{-\tau [x_1,\dots,x_q]}
	- \sum_{i=1}^q i \cdot e^{-\tau [x_0,\dots,x_q,x_i]}, & \text{\rm for}\quad q > 0,\\
        (\tau^2 x_0/2) e^{-\tau x_0}, & \text{\rm for}\quad q = 0.
        \end{array}\right.
\label{eq:c3a}
\end{multline}
\end{corollary}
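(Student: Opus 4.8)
The plan is to specialize the differential-operator identity of the preceding theorem, \eqref{eq:c2a}, by evaluating each of its two pieces in closed form. Writing the operator as $-\frac{\tau^2}{2}\,\partial_\tau-\sum_{i=0}^q i\,\partial_{x_i}$ acting on $e^{-\tau[x_0,\dots,x_q]}$, I would treat the $\tau$-derivative and the node-derivatives separately and then recombine. For the $\tau$-derivative, the Parametric derivative lemma \eqref{eq:parametric} gives, for $q>0$, $\partial_\tau e^{-\tau[x_0,\dots,x_q]}=-x_0\,e^{-\tau[x_0,\dots,x_q]}-e^{-\tau[x_1,\dots,x_q]}$, so that
\[
-\frac{\tau^2}{2}\,\partial_\tau\,e^{-\tau[x_0,\dots,x_q]}
=\frac{\tau^2 x_0}{2}\,e^{-\tau[x_0,\dots,x_q]}+\frac{\tau^2}{2}\,e^{-\tau[x_1,\dots,x_q]},
\]
which already supplies the first two terms of the claimed right-hand side.

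Next I would evaluate the node-derivative sum using the elementary differentiation rule that taking $\partial_{x_i}$ of a divided difference repeats the node $x_i$. Differentiating the contour-integral representation \eqref{eq:contour-int-dd} under the integral sign yields $\partial_{x_i}e^{-\tau[x_0,\dots,x_q]}=e^{-\tau[x_0,\dots,x_q,x_i]}$ for simple nodes, extended to coincident nodes by continuity. Hence $-\sum_{i=0}^q i\,\partial_{x_i}e^{-\tau[x_0,\dots,x_q]}=-\sum_{i=1}^q i\cdot e^{-\tau[x_0,\dots,x_q,x_i]}$, the $i=0$ summand vanishing because its coefficient is zero. Adding the two contributions reproduces the $q>0$ branch of \eqref{eq:c3a} exactly. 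For $q=0$ the node-derivative sum is empty, so only the $\tau$-term survives and gives $-\frac{\tau^2}{2}\,\partial_\tau e^{-\tau x_0}=(\tau^2 x_0/2)\,e^{-\tau x_0}$, which is the $q=0$ branch (equivalently, the base case of the preceding theorem).

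The computation is essentially bookkeeping once the two differentiation identities are in hand, so I do not expect a substantial obstacle. The only point requiring care is the node-differentiation rule $\partial_{x_i}e^{-\tau[x_0,\dots,x_q]}=e^{-\tau[x_0,\dots,x_q,x_i]}$, which is not recorded as a standalone lemma in the paper; I would justify it in one line from the contour integral \eqref{eq:contour-int-dd}, exactly as in the proof of the Parametric derivative lemma, and note the continuity extension to repeated nodes.
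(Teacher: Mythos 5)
Your proposal is correct and matches the paper's own proof, which is exactly the one-line observation that Eq.~(\ref{eq:c3a}) follows from applying Eq.~(\ref{eq:parametric}) and the node-differentiation rule to Eq.~(\ref{eq:c2a}); you have simply spelled out the details the paper leaves implicit. In particular, your one-line justification of $\partial_{x_i}e^{-\tau[x_0,\dots,x_q]}=e^{-\tau[x_0,\dots,x_q,x_i]}$ via the contour integral \eqref{eq:contour-int-dd} is precisely the right way to fill the only gap the paper's terse proof leaves open.
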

\begin{proof}
Eq.~(\ref{eq:c3a}) follows from Eq.~(\ref{eq:parametric}) and Eq.~(\ref{eq:c2a}).
\end{proof}

\section{Concluding remarks\label{sec:conc}}

The results presented here highlight several structural features of exponential divided differences 
that arise in a broad range of analytic and computational settings.

The log-submodularity and four-point inequality established in this work reveal a latent convexity structure underlying the exponential function's divided differences.
The sharp two-sided bounds at fixed mean and variance provide quantitative control and may prove useful for stability and sensitivity analyses of exponential integrators and operator inequalities.
The closed-form convolution and repeated-argument identities extend the analytic toolkit available for both theoretical investigations and numerical algorithms involving the exponential function and its divided differences, the matrix exponential, and operator analogues.

Beyond their intrinsic mathematical interest, these results suggest new connections among interpolation theory, matrix analysis, and probabilistic inequalities through the common language of exponential divided differences.
Future work may explore analogous properties for other operator-convex functions, extend these inequalities to matrix- or operator-valued settings, and examine their implications for higher-order exponential integrators and quantum Monte Carlo methods.

\section*{Acknowledgments}
N.E. acknowledges partial support from the Graduate Student Fellowship awarded by the USC Department of Physics and Astronomy in the Dornsife College of Letters, Arts, and Sciences, the the Gold Family Fellowship, the ARO MURI grant W911NF-22-S-000, and the U.S. Department of Energy (DOE) Computational Science Graduate Fellowship under Award No. DE-SC0020347 during parts of this work.
I.H. acknowledges support by the Office of Advanced Scientific Computing Research of the U.S. Department of Energy under Contract No. DE-SC0024389. 

\bibliographystyle{elsarticle-num}
\bibliography{main-v4-refs}

\end{document}